\DeclareMathOperator{\supp}{supp}
\DeclareMathOperator{\ass}{Ass}
\DeclareMathOperator{\astab}{astab}
\newtheorem{thm}{Theorem}[section]
\newtheorem{cor}[thm]{Corollary}
\newtheorem{lem}[thm]{Lemma}
\theoremstyle{definition}
\newtheorem{defn}[thm]{Definition}
\newtheorem{exm}[thm]{Example}
\def\P{{\mathcal P}}
\def\Z {\mathbb Z}
\def\p {\mathfrak p}
\def\q {\mathfrak q}
\def\m {\mathfrak m}
\def\ab {\mathbf a}
\def\bb {\mathbf b}
\title{Associated primes of powers of edge ideals of edge-weighted trees}
\author{Jiaxin Li}
\address{School of Mathematical Sciences, Soochow University, Suzhou, Jiangsu, 215006, P.R.~China}
\email{lijiaxinworking@163.com}
\author{Tran Nam Trung}
\address{Institute of Mathematics, Vietnam Academy of Science and Technology, 18 Hoang Quoc Viet, 10072 Hanoi, Vietnam}
\email{tntrung@math.ac.vn}
\author{Guangjun Zhu$^{\ast}$}
\address{School of Mathematical Sciences, Soochow University, Suzhou, Jiangsu, 215006, P.R.~China}
\email{zhuguangjun@suda.edu.cn}
\thanks{$^{\ast}$ Corresponding author}
 \date{\today}
\begin{document}

\thanks{2020 {\em Mathematics Subject Classification}.
Primary  13C05,  13F20 Secondary 05E40, 05C05}

\thanks{Keywords: Associated prime, Edge ideal, increasing weighted tree}

\begin{abstract} In this paper, we give a complete description of the associated primes of each power of the edge ideal of an increasing weighted tree. 
\end{abstract}
\maketitle

\section*{Introduction}

Let $R=K[x_1, \ldots, x_n]$ be a polynomial ring in $n$ variables over a field $K$. For an ideal $I\subset R$ and an integer $t\geqslant 1$, let $\ass(I^t)$ denote the set of  associated primes of $I^t$. Brodmann \cite{B} showed that $\ass(I^t)$ stabilizes  for all sufficiently large  $t$, meaning there exists a positive integer $t_0$ such that $\ass(I^t)=\ass(I^{t_0})$  for all $t\geqslant t_0$.  By virtue of this result, it is interesting to  describe  the set  $\ass(I^t)$ for each $t\geqslant 1$. This problem is difficult even when $I$ is a square-free monomial ideal (see \cite{FHV, HM, MV}).  When $I$ is an edge ideal, the associated primes of $I^t$ are first constructed algorithmically in \cite{CMS}, and then described completely in \cite{LT}.

Given a simple graph $G=(V,E)$ with the vertex set $V = \{x_1,\ldots,x_n\}$, recall that the ideal $I(G)$ of $R$ is generated by the monomials $x_ix_j$ where $x_ix_j$ is an edge of $G$. Then, every associated prime of $I(G)^t$ is of the form $(C)$, where $C$ is a vertex cover of $G$. In fact, for each vertex cover $C$ of $G$ and $t\geqslant 1$, Lam and Trung \cite{LT} gave a criterion for $(C)\in \ass(I(G)^t)$.

Now, moving away from square-free monomial ideals, we define  a weight function, $\omega \colon E \to \Z_{>0}$, on the edge set of $G$. The pair $(G,\omega)$  is called an {\it edge-weighted graph} (or simply a weighted graph), and is denoted by  $G_\omega$. The {\it weighted edge ideal} of $G_\omega$ is the monomial ideal of $R$ defined as follows (see \cite{PS}):
$$I(G_\omega) = ((x_ix_j)^{\omega(x_ix_j)}\mid x_i x_j\in E).$$

Since $\sqrt{I(G_\omega)^t} = I(G)$,  every associated prime of $I(G_\omega)^t$ is of the form $(C)$,  where $C$ is a vertex cover of $G$. Thus,  to describe the set $\ass(I(G_\omega)^t)$,  we  must determine if a vertex cover of $G$ forms an associated prime of $I(G_\omega)^t$. In this paper, we introduce the notion of {\it increasing weighted tree} and investigate this problem  when $G_\omega$ is such a weighted tree.

We say that $G_\omega$ is an increasing weighted tree if $G$  is a tree and there exists a vertex $v$, which is called a root of $G_\omega$, such that the weight function on every simple path from a leaf to root $v$ is increasing, i.e.,
 if
$$v_1\to v_2\to v_3\to \cdots \to v_k =v$$
is a  simple path from a leaf $v_1$ to $v$ of length at least $2$, then $\omega(v_iv_{i+1}) \leqslant \omega(v_{i+1}v_{i+2})$ for $i=1,\ldots, k-2$.

Let $C$ be a vertex cover  of $G$ such that $C\ne V$, and let $S = V\setminus C$. For each $u\in N_G(S)$, set $\nu_S(u) = \min\{\omega(zu)\mid z\in S \cap N_G(u)\}$. We say that $C$ is a {\it strong vertex cover} of $G_\omega$  if either $C$ is a minimal vertex cover of $G$ or, for every $w\in C \setminus N_G(S)$, there is a path from $w$ to a vertex $x$ in $N_G(S)$ as
$$w=w_1\to w_2\to \cdots  \to w_{k-1}\to w_k=x
$$
such that $\omega(w_{k-1}w_k) < \nu_S(x)$, but $w_1,\ldots,w_{k-1}\notin N_G(S)$.

If such a path also satisfies   $k\geqslant 3$ and $\omega(w_1w_2)=\omega(w_2w_3)$, then $w_2$ is called a {\it special} vertex of $C$. Let   $s(C)$ be the number of special vertices of $C$.

\medskip

Our main result is the following theorem.

\medskip

\noindent {{\bf Theorem \ref{main}}. \it Let $G_\omega$ be an increasing weighted tree and $t\geqslant 1$. If $C$ is a vertex cover of $G$, then $(C)$ is an associated prime of $I(G_\omega)^t$ if and only if $C$ is a strong vertex cover of $G_\omega$ and  $s(C)+1\leqslant t$.}

\medskip
For an ideal $I$, let $\astab(I)$ be the  smallest  positive integer $t_0$ such that $\ass(I^t)$ is constant for $t\geqslant t_0$. Let $\ass^\infty(I)$ denote the stable set $\ass(I^t)$ for $t\geqslant \astab(I)$. An immediate consequence of Theorem \ref{main} is that $$\ass^\infty(I(G_\omega)) = \{(C) \mid C \text{ is a strong vertex cover of } G_\omega\}.$$

Furthermore, we can provide precise formulas for both $\astab(I(G_\omega))$ and  for the index of stability of every associated prime of $\ass^\infty(I(G_\omega))$. It is worth mentioning that an upper bound for $\astab(I)$ is obtained in \cite{H} for every monomial ideal $I$, but this
bound is very large and not optimal. When $I$ is an edge ideal of a simple graph, a precise formula for $\astab(I)$ is provided in \cite{LT}. For the edge ideal of an increasing weighted tree $G_\omega$,  Theorem \ref{main} yields
$$\astab(I(G_\omega)) = \max\{s(C)+1\mid C \text{ is a strong vertex cover of } G_\omega\}.$$

The paper is organized as  follows:   Section \ref{sec:prelim} explores increasing weighted trees $G_\omega$.  In that section,  we characterize strong vertex covers of $G_\omega$ in terms of some weighted subgraphs and provide an efficient method for computing the number $s(C)$. Section \ref{sec:Second}  is devoted to proving the main result. The basic idea is the relationship between the associated primes of $I(G_\omega)^t$ and the strong vertex covers of $G_\omega$.

\section{Increasing weighted trees}
\label{sec:prelim}

In this section, we will explore increasing weighted trees. First, we will review some definitions and terminology from graph theory. Let $G$ be a graph. We often use  $V(G)$ and $E(G)$ to denote the vertex and the edge sets of $G$, respectively. If $u$ is a vertex in $G$, its neighborhood is the set  $N_G(u)=\{z \in V(G)\mid  zu\in E(G)\}$ and its degree, denoted by $\deg_{G}(u)$, is the size of $N_G(u)$.
If $\deg_G(u) = 1$, then $u$ is called a leaf. An edge that is incident with a leaf is called a pendant.  For any  $u\in V(G)$, let $L_G(u) = \{x\in N_G(u)\mid x \text{ is a leaf of } G\}$.

A subset $C \subseteq V(G)$ is a  {\it vertex cover}  of $G$ if every edge of $G$ has at least one endpoint in $C$. A minimal vertex cover  of $G$ is a vertex cover  of $G$ that is minimal with respect to inclusion.

The dual concept to the vertex cover is the {\it independent set}. Recall that an independent set of a graph $G$ is a collection of vertices with no two vertices adjacent to each other. Thus, the  complement of a vertex cover of $G$ in $V(G)$ is an independent set of $G$, and vice versa. Given  an independent set $S$ in $G$, its neighborhood is
$$N_G(S) = \{u\in V(G) \mid u \notin S \text{ and } N_G(u)\cap S \ne \emptyset\}.$$
We denote $G[S]$ to be the induced subgraph of $G$ on $S$, and  $G\setminus S$ to be  the induced subgraph of $G$ on $V (G)\setminus S$.

\medskip

We now define increasing paths in a weighted graph.

\begin{defn} Let $G_\omega$ be a weighted graph. Then,
\begin{enumerate}
    \item A simple path in $G$ is a sequence of distinct vertices: $v_1,v_2,\ldots, v_k$, where $v_iv_{i+1}\in E(G)$ for $i=1,\ldots,k-1$. In this case, the length of this path is $k-1$.
    \item We write $v_1\to v_2\to \cdots \to v_k$ to indicate the path $v_1,v_2,\ldots, v_k$ traveling from $v_1$ to $v_k$.
    \item A simple path $v_1\to v_2\to\cdots\to v_k$ is an increasing path if $\omega(v_iv_{i+1}) \leqslant \omega(v_{i+1}v_{i+2})$ for $i=1,\ldots,k-2$; and it is a strictly increasing path if $\omega(v_iv_{i+1}) < \omega(v_{i+1}v_{i+2})$ for $i=1,\ldots,k-2$.
\end{enumerate}
\end{defn}

\begin{defn} A weighted tree $G_\omega$ is called an increasing weighted tree, if there is a vertex $v$ such that  every simple path from a leaf to $v$ is increasing. In this case,  $v$ is called the root, and  $(G_\omega,v)$ is an increasing weighted tree, meaning $G_\omega$ is an increasing weighted tree with a root $v$.
\end{defn}

\begin{lem}\label{inc} If $(G_\omega,v)$ is an increasing weighted tree, then
\begin{enumerate}
    \item Every simple path to $v$ is increasing.
    \item There is no simple path in $G$ of the form
    $$v_1\to v_2\to\cdots \to v_{k-1}\to v_k, \text{where } k \geqslant 4$$
    such that $\omega(v_1v_2) > \omega(v_2v_3)$ and $\omega(v_{k-2}v_{k-1}) < \omega(v_{k-1}v_k)$.
\end{enumerate}
\end{lem}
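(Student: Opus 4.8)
The plan is to reduce both statements to the defining property --- that every simple path from a leaf to the root $v$ is increasing --- by exploiting two elementary features of trees: any two vertices are joined by a \emph{unique} simple path, and any sub-path of an increasing path is again increasing (the relevant weight comparisons form a sub-list).

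For part (1), given an arbitrary vertex $w$ with $w\ne v$, I would produce a leaf $\ell$ whose unique path to $v$ passes through $w$. Orienting $G$ towards $v$ and starting from $w$, I repeatedly move to a neighbour strictly farther from $v$; since $G$ is a finite tree this walk must terminate, and it can only terminate at a vertex all of whose neighbours are closer to $v$, i.e.\ at a vertex of degree one --- a leaf $\ell\ne v$. The unique path $\ell\to\cdots\to w\to\cdots\to v$ then contains $w$, and by hypothesis it is increasing, so its sub-path from $w$ to $v$ is increasing, which is exactly the assertion. The only care needed here is checking that the walk-away-from-$v$ procedure really stops at a genuine leaf.

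For part (2), suppose for contradiction that a simple path $v_1\to v_2\to\cdots\to v_k$ with $k\geqslant 4$ exists satisfying $\omega(v_1v_2)>\omega(v_2v_3)$ and $\omega(v_{k-2}v_{k-1})<\omega(v_{k-1}v_k)$. Using that $G$ is a tree, I would locate the vertex $v_j$ at which the (unique) path from $v_1$ to $v$ and the (unique) path from $v_k$ to $v$ leave the given path --- concretely, the common vertex of the three paths joining $v_1$, $v_k$ and $v$. Since this vertex lies on $v_1\to\cdots\to v_k$ it equals some $v_j$, and then the path from $v_1$ to $v$ begins $v_1\to v_2\to\cdots\to v_j\to\cdots\to v$, while the path from $v_k$ to $v$ begins $v_k\to v_{k-1}\to\cdots\to v_j\to\cdots\to v$. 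By part (1) both are increasing: if $j\geqslant 3$ the first forces $\omega(v_1v_2)\leqslant\omega(v_2v_3)$, contradicting the hypothesis, so $j\leqslant 2$; if $j\leqslant k-2$ the second forces $\omega(v_{k-1}v_k)\leqslant\omega(v_{k-2}v_{k-1})$, again a contradiction, so $j\geqslant k-1$. Hence $k-1\leqslant j\leqslant 2$, giving $k\leqslant 3$, contrary to $k\geqslant 4$.

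The main obstacle --- really the only non-routine point --- is the combinatorial geometry in part (2): one must correctly identify the branch vertex $v_j$ and verify that the paths from $v_1$ and from $v_k$ to $v$ genuinely contain the initial segment $v_1,\ldots,v_j$ and the terminal segment $v_j,\ldots,v_k$ of the given path. Once this is set up, part (1) delivers the contradictory weight inequalities immediately.
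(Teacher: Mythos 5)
Your proof is correct and follows essentially the same strategy as the paper's. The paper's case split on whether the second vertex of the unique $v_3$-to-$v$ path equals $v_2$ is precisely a determination of whether your branch vertex $v_j$ satisfies $j\leqslant 2$ or $j\geqslant 3$, and in each case the contradiction is derived by applying part (1) to the appropriate end of the path, exactly as you do.
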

\begin{proof} (1) Let $v_1\to v_2\to\cdots \to v$ be a simple path in $G$. If $v_1$ is a leaf, then the path is increasing by the definition. Otherwise, we can find a simple path from some leaf to $v_1$ in $G$, say $u_1\to\cdots\to u_j=v_1$. Then,
$$u_1\to\cdots\to u_j = v_1\to v_2\to\cdots\to v$$
is a simple path in $G$ from the leaf $u_1$ to $v$. Therefore, the path $v_1\to v_2\to\cdots\to v$ is increasing.

$(2)$ By the assumption, we can deduce that $v_3 \ne v$. Let $v_3=u_1\to u_2\to\cdots \to v$ be a simple path from $v_3$ to the root $v$. If $v_2\ne u_2$, then
$$v_1\to v_2\to v_3=u_1\to u_2\to \cdots \to v$$
is a simple path from $v_1$ to the root $v$.  This contradicts  Part $(1)$, since $\omega(v_1v_2) > \omega(v_2v_3)$. Therefore,  $v_2=u_2$. Thus,
$$v_k\to v_{k-1}\to \cdots \to v_3=u_1\to u_2\to \cdots \to v$$
is a simple path from $v_k$ to the root $v$, which contradicts Part $(1)$  because  $\omega(v_kv_{k-1}) > \omega(v_{k-1}v_{k-2})$. Therefore, $(2)$ follows.
\end{proof}

Let $G_\omega$ be a weighted graph. If $H$ is a subgraph of $G$, then $H_\omega$ is the weighted graph whose weight function is the restriction of  $\omega$ to the edge set of $H$. This means that the weight of an edge $e$ of $H$ is  $\omega(e)$ when $e$ is viewed as  an edge of $G$. We also say that $H_\omega$ is a weighted subgraph of $G_\omega$.

\begin{lem}\label{sub-tree} If $(G_\omega,v)$ is an increasing weighted tree, then every weighted subtree of $G_\omega$ is also an increasing weighted tree.
\end{lem}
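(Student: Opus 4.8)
The plan is to show that if $H$ is a subtree of $G$, then $H_\omega$ inherits an increasing structure, but with a possibly different root. The key observation is that the notion of "increasing weighted tree" does not require the root to be any particular vertex — it only requires the \emph{existence} of a vertex $v$ such that every leaf-to-$v$ path is increasing. So my task is: given the subtree $H$, produce a suitable root $v'$ inside $H$ and verify the increasing condition for $H_\omega$ with respect to $v'$.

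First I would handle the base cases where $H$ has at most two vertices (trivially increasing, since there are no paths of length $\geqslant 2$), so assume $H$ has at least three vertices. The natural candidate for the root of $H$ is the vertex of $H$ that is "closest to $v$" in $G$ — more precisely, among all vertices of $H$, pick $v'$ to be the one minimizing the distance in $G$ to the original root $v$; since $G$ is a tree and $H$ is connected, this $v'$ is unique (it is the first vertex where the unique $G$-paths from any two vertices of $H$ to $v$ merge, essentially the "apex" of $H$ relative to $v$). The point of this choice is that for every vertex $u \in V(H)$, the unique path in $H$ from $u$ to $v'$ coincides with the unique path in $G$ from $u$ to $v'$, which is an initial segment of the unique path in $G$ from $u$ to $v$.

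Next I would verify the increasing condition. Let $w_1 \to w_2 \to \cdots \to w_m = v'$ be a simple path in $H$ from a leaf $w_1$ of $H$ to $v'$, with $m \geqslant 3$. By the choice of $v'$, this is also a simple path in $G$, and it extends to a simple path $w_1 \to w_2 \to \cdots \to v' \to \cdots \to v$ in $G$ (appending the $G$-path from $v'$ to $v$, which meets $H$ only at $v'$). By Lemma \ref{inc}(1), every simple path to $v$ in $G$ is increasing; in particular the restriction to $w_1 \to \cdots \to v'$ is increasing, i.e. $\omega(w_iw_{i+1}) \leqslant \omega(w_{i+1}w_{i+2})$ for $i = 1, \ldots, m-2$. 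This is exactly the condition needed for $(H_\omega, v')$ to be an increasing weighted tree.

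I do not expect a serious obstacle here; the argument is essentially bookkeeping about paths in a tree. The one point requiring a little care is the claim that the $H$-path from $u$ to $v'$ agrees with the $G$-path and that appending the $G$-path from $v'$ to $v$ yields a \emph{simple} path — both follow from the uniqueness of paths in a tree together with the minimality in the choice of $v'$ (if the $G$-path from $u$ to $v$ re-entered $H$ strictly before reaching $v$, or if the $H$-path to $v'$ differed from the $G$-path, one would contradict that $v'$ is the unique closest vertex of $H$ to $v$). I would state this merging/uniqueness fact as a short preliminary observation and then conclude.
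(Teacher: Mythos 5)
Your proposal is correct and follows essentially the same route as the paper: you pick as the new root the vertex $v'$ of $H$ closest to $v$ in $G$ (equivalently, the unique vertex of $H$ at which the $G$-path from $H$ to $v$ exits $H$), extend any simple path in $H$ ending at $v'$ by the $G$-path from $v'$ to $v$, and invoke Lemma~\ref{inc}(1). The paper phrases the same idea by separately treating the cases $v \in V(T)$ and $v \notin V(T)$, but the underlying choice of root and path-extension argument are identical to yours.
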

\begin{proof} Let $T$ be a subtree of $G$. If $v$ is a vertex of $T$, then $T_\omega$ is an increasing weighted tree by Lemma \ref{inc}. If $v$ is not a vertex of $T$, then we can choose a simple path of the form $v_1\to v_2\to \cdots\to v$ such that $v_1$ is the only vertex of $T$  on this path.  In this case,  $T_\omega$ is an increasing weighted tree with  root $v_1$. Indeed, let $u_1\to u_2\to \cdots \to u_j=v_1$ be any simple path in $T$. Then, $u_1\to u_2\to \cdots \to u_j=v_1\to v_2\to \cdots\to v$ is a simple path in $G$. It follows that $u_1\to u_2\to \cdots \to u_j$ is an increasing path by Lemma \ref{inc}, as required.
\end{proof}

\medskip

\begin{lem}\label{path} Assume that $(G_\omega,v)$ is an increasing weighted tree. If $G$ is not a star  graph with a root $v$, then there is a longest path $v= v_0\to v_1\to\cdots\to v_k$ in $G$ from  $v$ such that
\begin{enumerate}
    \item $v_k$ is a leaf;
    \item if $u\in N_G(v_{k-2})$ is a non-leaf, then $\omega(v_{k-1}v_{k-2})\leqslant \omega(v_{k-2}u)$;
    \item $N_G(v_{k-1})$ has only one non-leaf $v_{k-2}$;
    \item $\omega(v_{k-1}u) \leqslant \omega(v_{k-1}v_{k-2})$ for all $u\in N_G(v_{k-1})$;
    \item $\omega(v_{k-1}v_k) \leqslant \omega(v_{k-1}u)$ for all $u\in N_G(v_{k-1})$.
\end{enumerate}
\end{lem}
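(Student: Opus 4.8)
The plan is to produce the desired path by a greedy ``lowest-weight descent'' procedure and then verify that the five conditions can be arranged simultaneously by passing to a judicious choice among the eligible longest paths. First I would fix any leaf $v_k$ that is at maximum distance from $v$; let $v_{k-1}$ be its unique neighbor and $v_{k-2}$ the neighbor of $v_{k-1}$ on the geodesic toward $v$ (this exists because $G$ is not a star rooted at $v$, so $k\geqslant 2$ and $v_{k-1}$ has a non-leaf neighbor, namely the one closer to $v$). The key structural observation, used repeatedly, is that \emph{every} neighbor of $v_{k-1}$ other than $v_{k-2}$ must be a leaf: if $u\in N_G(v_{k-1})\setminus\{v_{k-2}\}$ were a non-leaf, then $u$ would have a neighbor $u'\neq v_{k-1}$, and the path $v\to\cdots\to v_{k-2}\to v_{k-1}\to u\to u'$ would be strictly longer than the chosen longest path from $v$ — a contradiction. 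This immediately gives condition (3).

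Next I would secure the weight conditions (4) and (5) by re-selecting within the star centered at $v_{k-1}$. Among the leaves adjacent to $v_{k-1}$ (together with $v_{k-2}$ if we are momentarily willing to reroute), pick $v_k$ to be a leaf neighbor realizing $\min\{\omega(v_{k-1}u)\mid u\in N_G(v_{k-1})\}$; this yields (5) directly. For (4) I would invoke Lemma \ref{inc}(2): since $G_\omega$ is an increasing weighted tree and $v_{k-2}$ lies on the path to the root, any neighbor $u$ of $v_{k-1}$ with $\omega(v_{k-1}u)>\omega(v_{k-1}v_{k-2})$ together with a leaf neighbor $u'\neq u$ of $v_{k-1}$ would produce a path $u'\to v_{k-1}\to u$ that, prolonged to the root through $v_{k-2}$, violates increasingness (or the no-``valley-then-ascent'' statement of Lemma \ref{inc}(2)); handling the degenerate cases where $v_{k-1}$ has few neighbors is routine. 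Condition (1) is automatic from the choice of $v_k$ as a leaf, and condition (2) follows from increasingness along paths to the root: for a non-leaf $u\in N_G(v_{k-2})$, extend through $u$ toward a leaf and compare with the increasing path through $v_{k-1}v_{k-2}$, so that $\omega(v_{k-1}v_{k-2})\leqslant\omega(v_{k-2}u)$.

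The main obstacle I anticipate is the \emph{simultaneity} of the five conditions: individually each is easy, but re-selecting $v_k$ to fix (5) could in principle spoil the maximality of the path length or the identity of $v_{k-1}$ and $v_{k-2}$, and re-selecting the whole tail could move $v_{k-2}$ to a vertex where (2) fails. I would handle this by organizing the argument around a fixed choice of $v_{k-1}$ and $v_{k-2}$: first choose a longest path from $v$; then among all longest paths from $v$ sharing the same penultimate vertex $v_{k-1}$, the set of admissible final leaves is exactly $\{u\in N_G(v_{k-1})\mid u\text{ a leaf}\}$, and within this fixed star all of (1)--(5) are statements purely about the weights $\omega(v_{k-1}u)$ and the fixed value $\omega(v_{k-1}v_{k-2})$, together with Lemma \ref{inc}. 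Thus the conditions decouple once the star is fixed, and the only real work is the case analysis when $\deg_G(v_{k-1})=2$ (so the ``min'' is over a two-element set) or when $v_{k-2}$ itself has no non-leaf neighbor besides the one toward the root, both of which are quickly dispatched using Lemma \ref{inc}.
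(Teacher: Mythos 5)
Your argument for condition~(2) has a genuine gap. You claim that $\omega(v_{k-1}v_{k-2})\leqslant\omega(v_{k-2}u)$ for every non-leaf $u\in N_G(v_{k-2})$ ``follows from increasingness along paths to the root,'' but that is false for an arbitrary longest path. Increasingness toward the root gives you $\omega(v_{k-1}v_{k-2})\leqslant\omega(v_{k-2}v_{k-3})$ and, for a non-leaf branch $u$ with a further neighbor $u'$, also $\omega(u'u)\leqslant\omega(uv_{k-2})\leqslant\omega(v_{k-2}v_{k-3})$; it never compares $\omega(v_{k-1}v_{k-2})$ with $\omega(uv_{k-2})$. Concretely, take $V(G)=\{v,a,b,c,d,e\}$ with edges $va,ab,bc,ad,de$ and weights $\omega(va)=5$, $\omega(ab)=3$, $\omega(bc)=1$, $\omega(ad)=2$, $\omega(de)=1$. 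This is an increasing weighted tree rooted at $v$ (check $c\to b\to a\to v$ and $e\to d\to a\to v$), and $v\to a\to b\to c$, $v\to a\to d\to e$ are both longest paths from $v$. If you choose $P=v\to a\to b\to c$, then $v_{k-2}=a$, $v_{k-1}=b$, and $d$ is a non-leaf neighbor of $a$ with $\omega(ad)=2<3=\omega(ab)$, so condition~(2) fails. Your remedy of ``first fix $v_{k-1},v_{k-2}$ from a longest path, then decouple within the fixed star'' cannot help, because the defect is precisely in the choice of $v_{k-1}$, not of $v_k$.

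What the paper does that you are missing is a global minimization over the set $\mathcal{P}$ of longest paths from $v$: it picks $P\in\mathcal{P}$ so that $\omega(v_{k-2}v_{k-1})$ is smallest. Then if a non-leaf $w\in N_G(v_{k-2})$ had $\omega(v_{k-2}w)<\omega(v_{k-2}v_{k-1})$, one checks $w\notin\{v_{k-1},v_{k-3}\}$ and, since $w$ is a non-leaf, reroutes $v\to\cdots\to v_{k-2}\to w\to(\text{some neighbor})$ to get another element of $\mathcal{P}$ with a strictly smaller penultimate weight --- contradiction. Only condition~(5) is then repaired by a final re-choice of $v_k$ among the leaf neighbors of $v_{k-1}$, which leaves $v_{k-1}$, $v_{k-2}$, and $\omega(v_{k-2}v_{k-1})$ untouched, so conditions (1)--(4) are not disturbed. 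Your arguments for (1), (3) and (5) match the paper's; for (4) you invoke Lemma~\ref{inc}(2), but Lemma~\ref{inc}(1) is both simpler and actually correct here: by (3), any $u\in N_G(v_{k-1})\setminus\{v_{k-2}\}$ is a leaf, and the simple path $u\to v_{k-1}\to v_{k-2}\to\cdots\to v$ to the root must be increasing, giving $\omega(uv_{k-1})\leqslant\omega(v_{k-1}v_{k-2})$ directly. The crucial missing idea is the minimization of $\omega(v_{k-2}v_{k-1})$ over all longest paths; without it, condition~(2) is simply unavailable.
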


\begin{proof}
Let $\mathcal{P}$ be the set of the longest paths in $G$ that start at $v$. Since $G$ is not a star  graph, every path in $\mathcal{P}$ has the same length, say $k$, at least $2$. Let 
$$P \colon v= v_0\to v_1\to \cdots \to v_{k-2}\to v_{k-1}\to v_k$$
be a path in $\mathcal{P}$ such that $\omega(v_{k-2}v_{k-1})$ is the smallest.  We will show that, after modifying the last vertex,  this path is the desired one.

$(1)$ If $v_k$ is not a leaf, then there is a $u\in N_G(v_k)\setminus \{v_{k-1}\}$. Therefore,  $v=v_0\to v_1\to\cdots\to v_k\to u$ is a simple path of length $k+1$, which is a contradiction. Thus, $v_k$ is a leaf.

$(2)$ Assume that there is a non-leaf $w\in N_G(v_{k-2})$ such that $\omega(v_{k-2}w)<\omega(v_{k-2}v_{k-1})$. Then, $w\notin \{v_{k-1},v_{k-3}\}$. Since $w$ is not a leaf,  it is adjacent to a vertex $u\ne v_{k-2}$. Therefore, $v=v_1\to v_2\to\cdots\to v_{k-2}\to w\to u$ is a simple path of length $k$, and  belongs to $\mathcal{P}$. However, $\omega(v_{k-2}w)<\omega(v_{k-2}v_{k-1})$, which contradicts the choice of $P$, and $(2)$ follows.

$(3)$ If $x\neq v_{k-2}$ is a non-leaf of $G$ that is adjacent to $v_{k-1}$, and  there is a $y \in N_G(x)$ that is different from $v_{k-1}$, then the simple path $v=v_0\to v_1\to\cdots\to v_{k-2}\to v_{k-1}\to x\to y$ has length $k+1$, a contradiction. Thus,  $N_G(v_{k-1})$ has only one non-leaf $v_{k-2}$.

$(4)$ Let $x\in N_G(v_{k-1})\setminus\{v_{k-2}\}$. Then,  by the condition $(3)$,  $x$ is a leaf. Since the path $x\to v_{k-1}\to v_{k-2}\to\cdots \to v_0=v$ is simple, we have $\omega(v_{k-1}x)\leqslant \omega(v_{k-1}v_{k-2})$.

$(5)$ Let $x\in N_G(v_{k-1})$ be a leaf such that $\omega(xv_{k-1})\leqslant \omega(uv_{k-1})$ for every $u\in N_G(v_{k-1})$. By replacing $P$ with  $v=v_0\to v_1\to\cdots\to v_{k-1}\to x$, we obtain a simple path that satisfies all conditions $(1)$-$(5)$, and the lemma follows.
\end{proof}

\medskip

\begin{defn} Let $G_\omega$ be a weighted tree and let $S$ be an independent set of $G$.
\begin{enumerate}
    \item For every $u\in N_G(S)$, set
  $$\nu_S(u) = \min\{\omega(uz)\mid z\in S \cap N_G(u)\}.$$
    \item Define $G_S$ to be the graph with the vertex set $V(G)\setminus S$ and the edge set obtained from the edge set of $G\setminus S$ by removing every edge $uz$ such that  $u\in N_G(S)$ and $\omega(uz)\geqslant \nu_S(u)$.
\end{enumerate}
\end{defn}

\begin{lem}\label{components} Let $(G_\omega,v)$ be an  increasing weighted tree and let $S$ be an independent set of $G$. Then 
\begin{enumerate}
    \item $N_G(S)$ is an independent set of $G_S$.
    \item If $T$ is any connected component of $G_S$, then
    $|V(T)\cap N_G(S)| \leqslant 1$. Moreover, if $V(T)\cap N_G(S) = \{u\}$, then $(T_\omega,u)$ is  an increasing weighted tree.
\end{enumerate}
\end{lem}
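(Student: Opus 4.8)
The workhorse is Lemma \ref{inc}(2), which forbids any simple path $v_1\to\cdots\to v_k$ with $k\geqslant 4$, $\omega(v_1v_2)>\omega(v_2v_3)$ and $\omega(v_{k-2}v_{k-1})<\omega(v_{k-1}v_k)$. Note also that $G_S$ is a subgraph of the forest $G\setminus S$, hence a forest, so each connected component $T$ is a tree. Whenever $w\in N_G(S)$ I will fix a witness $z_w\in S\cap N_G(w)$ with $\omega(wz_w)=\nu_S(w)$, and I will repeatedly use the defining property of $G_S$: if an edge $wy$ survives in $G_S$ and $w\in N_G(S)$, then $\omega(wy)<\nu_S(w)$.

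For part (1), suppose for contradiction $u,u'\in N_G(S)$ with $uu'\in E(G_S)$. Then $\omega(uu')<\nu_S(u)$ and $\omega(uu')<\nu_S(u')$, so $z_u\to u\to u'\to z_{u'}$ is a path in $G$ with $\omega(z_uu)>\omega(uu')<\omega(u'z_{u'})$. These four vertices are distinct: $z_u,z_{u'}\in S$ are distinct from $u,u'\notin S$, and $z_u\neq z_{u'}$ since otherwise $z_u,u,u'$ would form a $3$-cycle in the tree $G$. Thus Lemma \ref{inc}(2) is violated. For the first assertion of part (2), if $u\neq u'$ were two vertices of $V(T)\cap N_G(S)$, let $u=w_0\to w_1\to\cdots\to w_m=u'$ be the path joining them inside $T$; by part (1), $m\geqslant 2$. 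The edges $w_0w_1$ and $w_{m-1}w_m$ survive in $G_S$, so $\omega(w_0w_1)<\nu_S(u)$ and $\omega(w_{m-1}w_m)<\nu_S(u')$, and then $z_u\to w_0\to\cdots\to w_m\to z_{u'}$ is a simple path of length $m+2\geqslant 4$ again contradicting Lemma \ref{inc}(2) (simplicity as before, using $z_u,z_{u'}\in S$ and acyclicity of $G$).

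For the ``moreover'' statement, assume $V(T)\cap N_G(S)=\{u\}$; we may assume $|V(T)|\geqslant 2$, and it suffices to check that every path $\ell=p_0\to p_1\to\cdots\to p_r=u$ from a leaf $\ell$ of $T$ to $u$ is increasing. Suppose not, and pick $i$ with $0\leqslant i\leqslant r-2$ and $\omega(p_ip_{i+1})>\omega(p_{i+1}p_{i+2})$. Since $p_{r-1}\in V(T)\setminus\{u\}$, the first assertion of part (2) gives $p_{r-1}\notin N_G(S)$, so the surviving edge $p_{r-1}u$ can only be removed from $G_S$ on account of its endpoint $u$; hence $\omega(p_{r-1}u)<\nu_S(u)=\omega(uz_u)$. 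Then $p_i\to p_{i+1}\to\cdots\to p_r=u\to z_u$ is a simple path of length $(r-i)+1\geqslant 3$, with $\omega(p_ip_{i+1})>\omega(p_{i+1}p_{i+2})$ at the start and $\omega(p_{r-1}u)<\omega(uz_u)$ at the end, once more contradicting Lemma \ref{inc}(2). So every such path is increasing and $(T_\omega,u)$ is an increasing weighted tree.

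The arguments are short once set up; the only point that needs attention in each of the three parts is the verification that the auxiliary path obtained by appending witness vertices $z_w\in S$ is genuinely simple, since that is precisely where the independence of $S$ and the absence of cycles in $G$ enter.
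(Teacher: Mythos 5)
Your proofs of part (1) and of the bound $|V(T)\cap N_G(S)|\leqslant 1$ are essentially the paper's: in both cases you append a witness vertex of $S$ to each end of a surviving path inside $G_S$ and invoke the forbidden pattern of Lemma \ref{inc}(2), checking simplicity via $S$'s independence and the acyclicity of $G$. (The paper, incidentally, cites Lemma \ref{path} in the first part of (2) where it means Lemma \ref{inc}(2).)

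For the ``moreover'' claim your route is genuinely different, and cleaner. The paper splits on whether $u$ equals the root $v$ of $G_\omega$; in the case $u\ne v$ it first proves $v\notin V(T)$, then produces a simple path from $u$ to $v$ whose only vertex in $T$ is $u$, and finally concatenates an arbitrary path of $T$ with that tail and applies the monotonicity statement Lemma \ref{inc}(1). You avoid mentioning the ambient root $v$ entirely: you note that $p_{r-1}\notin N_G(S)$ forces $\omega(p_{r-1}u)<\nu_S(u)=\omega(uz_u)$, so a hypothetical ``descent'' $\omega(p_ip_{i+1})>\omega(p_{i+1}p_{i+2})$ followed by the ascent $\omega(p_{r-1}u)<\omega(uz_u)$ creates the forbidden pattern of Lemma \ref{inc}(2) on the path $p_i\to\cdots\to u\to z_u$ (with $k=(r-i)+2\geqslant 4$). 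This makes all three parts of the lemma rest on the single combinatorial obstruction in Lemma \ref{inc}(2), dispenses with the case split on $u=v$ and the auxiliary claim $v\notin V(T)$, and is self-contained. The argument is correct; the only caveat I would add is what you already flag, namely that simplicity of the extended path must be (and is) verified from $z_u\in S$, $p_i,\dots,p_r\notin S$.
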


\begin{proof} $(1)$ Assume by contradiction that the set $N_G(S)$ is not an independent set of $G_S$, then  there is $uv\in E(G_S)$ with $u,v\in N_G(S)$. Let $x,y\in S$ such that $xu,yv\in E(G)$, and $\omega(xu) > \omega(uv)$ and $\omega(yv) > \omega(uv)$. Since $G_S$ is a subtree of $G$, we have  $uv\in E(G)$. Therefore, there is a simple path $x\to u\to v\to y$ with $\omega(xu) > \omega(uv)$ and $\omega(yv) > \omega(uv)$. This contradicts Lemma \ref{inc}.  Therefore,  $N_G(S)$ is an independent set of $G_S$.

$(2)$ Assume that $V(T)\cap N_G(S)\ne \emptyset$. Let $w$ be an element in this intersection. Now, assume that there is a $u\in V(T)\cap N_G(S)$ with $u\ne w$. Let $x\in N_G(u)\cap S$ and $y\in N_G(w)\cap S$. Clearly, $x\ne y$, since $G$ has no cycles. Now let
$$u=u_1\to u_2\to\cdots\to u_{k-1}\to u_k=w$$
be a simple  path in $T$ from $u$ to $w$. Then,
$$x\to u=u_1\to u_2\to\cdots\to u_{k-1}\to u_k=w\to y$$
is a simple path in $G$ with $\omega(xu) > \omega(uu_2)$ and $\omega(u_{k-1}w) < \omega(wy)$, which contradicts Lemma \ref{path}. Therefore, $V(T)\cap N_G(S)$ has just one element $w$.

We now show that $(T_\omega,w)$ is an increasing tree. If $w=v$, then $(T_\omega,w)$ is an increasing weighted tree by Lemma \ref{path}.

Assume that $w\ne v$. We first note that $v\notin V(T)$. Indeed, if $v\in V(T)$, then there is a simple path in $T$ from $w$ to $v$ in the form $w=v_1\to v_2\to \cdots \to v_i = v$. Then, there is a simple path  from some vertex $x\in N_G(v) \cap S$ to $v$   in the form
$x\to w=v_1\to v_2\to \cdots \to v_i =v$. Since $\omega(xw) > \omega(wv_2)$, this contradicts Lemma \ref{inc}.  Therefore,  $v\notin V(T)$.

Now, take a simple path $y= y_1\to y_2\to \cdots \to y_p =v$ in $G$ from a vertex $y\in V(T)$ to $v$ such that $y$ is the only vertex of $T$  on this path. We will show that $y =w$. Indeed, if $y\ne w$, then there is a simple path in $T$ from $w$ to $y$ of the form
$$w= a_1\to a_2\to\cdots \to a_q=y,$$
where $q\geqslant 2$. Then,
$$x\to w=a_1\to a_2\to\cdots \to a_q=y= y_1\to y_2\to \cdots \to y_p =v$$
is a simple path from a vertex $x\in S\cap N_G(w)$ to $v$ in $G$ and  $\omega(xw) > \omega(wa_2)$. This contradicts Lemma \ref{inc}. Therefore, $y=w$.

Finally, if $b_1\to b_2\to\cdots\to b_j=w$ is  any simple path in $T$, then
$$b_1\to b_2\to\cdots\to b_j=w=y_1\to y_2\to \cdots \to y_p =v$$
is a simple path from $b_1$ to $v$ in $G$. By Lemma \ref{inc},
$b_1\to b_2\to\cdots\to b_j=w$
is an increasing path. This shows that $(T_\omega,w)$ is an increasing weighted tree, completing the proof.
\end{proof}

As a consequence, we can use Lemma \ref{components} to determine  whether a vertex cover $C$ of an increasing weighted tree $G_\omega$ is strong and to compute $s(C)$.

\medskip

\begin{defn} Let $G_\omega$ be an increasing tree with a root $v$. A vertex $w$ of $G$ is called a special vertex of $(G_\omega,v)$ if there is a simple path in $G$ to $v$ of the form: $$u\to w\to x\to\cdots\to v$$ such that $\omega(uw)=\omega(wx)$.
We define $s(v,G_\omega)$  as the number of special vertices of $(G_\omega,v)$.
\end{defn}

\begin{lem}\label{strong-vc} Let $C$ be a vertex cover of an increasing weighted tree $G_\omega$ such that $C\ne V(G)$. Let $S = V(G)\setminus C$, and assume that $N_G(S) = \{r_1,\ldots,r_k\}$. For each $i=1,\ldots,k$, let $T^i$ be a connected component of $G_S$ such that $r_i\in V(T^i)$. Then, $C$ is a strong vertex cover of $G_\omega$ if and only if $G_S$ has exactly $k$ connected components $T^1,\ldots,T^k$ such that  $r_i\in V(T^i)$. Morevover, if $C$ is a strong vertex cover  of $G_\omega$, then  $(T^i_\omega, r_i)$ is an increasing weighted tree for $i=1,\ldots,k$, and
$$s(C) = \sum_{i=1}^k s(r_i, T^i_\omega).$$
\end{lem}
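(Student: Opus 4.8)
The plan is to decompose the statement of Lemma \ref{strong-vc} into three assertions and prove them in sequence, all leaning on the structural results of Lemma \ref{components} and Lemma \ref{path}. First, I would establish the equivalence: $C$ is a strong vertex cover of $G_\omega$ if and only if $G_S$ has exactly $k$ connected components $T^1, \dots, T^k$ with $r_i \in V(T^i)$. The forward direction requires showing that if $C$ is strong then every vertex of $C = V(G)\setminus S$ lies in some component containing a vertex of $N_G(S)$; this is exactly what the defining path in the strong-vertex-cover condition provides, once one checks that the edges along that path survive in $G_S$. The key observation is that a path $w = w_1 \to \cdots \to w_k = x$ with $w_1,\dots,w_{k-1}\notin N_G(S)$ and $\omega(w_{k-1}w_k) < \nu_S(x)$ has all its edges in $E(G_S)$: edges $w_iw_{i+1}$ with $i < k-1$ are not incident to $N_G(S)$ at all (so they are automatically retained), and the last edge $w_{k-1}w_k$ is retained because its weight is strictly below $\nu_S(x) = \nu_S(w_k)$ and $w_{k-1}\notin N_G(S)$. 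Conversely, since by Lemma \ref{components}(2) each component of $G_S$ contains at most one vertex of $N_G(S)$, having exactly $k$ components forces each vertex of $C$ into a component with some $r_i$, and reversing the path from that vertex to $r_i$ inside $T^i$ yields the required certifying path (here one must argue the last edge of that path, incident to $r_i$, has weight $< \nu_S(r_i)$ — which is precisely the criterion under which it was kept in $G_S$ — while the penultimate vertex is not in $N_G(S)$ by Lemma \ref{components}(1)). The minimal-vertex-cover case is handled separately and easily: $C$ minimal means $C = N_G(S')$-type covers with $C \setminus N_G(S) = \emptyset$, so the path condition is vacuous.

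Second, the "moreover" clause that $(T^i_\omega, r_i)$ is an increasing weighted tree is immediate from Lemma \ref{components}(2): once we know $G_S$ has exactly $k$ components and $r_i \in V(T^i)$, and the $r_i$ are distinct, we get $V(T^i) \cap N_G(S) = \{r_i\}$, so that lemma directly gives the conclusion.

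Third, and this is where the real work lies, I would prove the counting formula $s(C) = \sum_{i=1}^k s(r_i, T^i_\omega)$. The strategy is to set up a bijection between the special vertices of $C$ (in the sense of the strong-vertex-cover definition) and the disjoint union over $i$ of the special vertices of $(T^i_\omega, r_i)$ (in the sense of the later definition preceding this lemma). Given a special vertex $w_2$ of $C$, it comes with a path $w = w_1 \to w_2 \to \cdots \to w_k = x \in N_G(S)$ with $k \geq 3$, $\omega(w_1w_2) = \omega(w_2w_3)$, and $w_1,\dots,w_{k-1}\notin N_G(S)$; the reversed path $x = w_k \to \cdots \to w_1$ lies in the component $T^i$ with $r_i = x$, and exhibits $w_2$ as a special vertex of $(T^i_\omega, r_i)$ via the path $w_1 \to w_2 \to w_3 \to \cdots \to r_i$. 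Conversely a special vertex of $(T^i_\omega, r_i)$, certified by a path $u \to w \to y \to \cdots \to r_i$ in $T^i$ with $\omega(uw) = \omega(wy)$, extends (by appending an edge $r_i \to z$ with $z \in S$, $\omega(r_i z) = \nu_S(r_i)$) to a certifying path for $w$ as a special vertex of $C$ — one must check $w \in C \setminus N_G(S)$ and that the last edge of the extended path before reaching $N_G(S)$ still has weight below $\nu_S(r_i)$, which holds because all edges of $T^i$ incident to $r_i$ were kept in $G_S$ and hence have weight $< \nu_S(r_i)$. The main obstacle is well-definedness and injectivity of this correspondence: a priori a vertex could be special via several different paths, possibly landing in different components, and one must verify the certifying path (if it exists) is essentially unique up to the relevant data because $G$ is a tree — there is a unique simple path between any two vertices, so the path from $w$ towards $N_G(S)$ is forced once we know which vertex of $N_G(S)$ it reaches, and in a tree each component $T^i$ has a unique vertex of $N_G(S)$, pinning down $i$. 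I expect that carefully tracking the "$k \geq 3$ and $\omega(w_1 w_2) = \omega(w_2 w_3)$" condition under reversal, and confirming it matches the "$\omega(uw) = \omega(wy)$" condition in the $(T^i_\omega, r_i)$ definition while the path still has length $\geq 2$ from $w$ to the root, is the delicate bookkeeping step, but no deep idea beyond the tree structure and Lemmas \ref{inc}, \ref{path}, \ref{components} is needed.
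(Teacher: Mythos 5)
Your plan follows the paper's exactly: the forward and backward directions of the equivalence via the observation that certifying paths survive in $G_S$, Lemma \ref{components}(2) for the increasing-tree claim, and a vertex-by-vertex matching of special vertices for the counting formula (which the paper compresses into a one-line ``we can directly verify''). One small slip to flag in the converse direction of your bijection: you say the path $u\to w\to y\to\cdots\to r_i$ in $T^i$ ``extends (by appending an edge $r_i\to z$ with $z\in S$)'' to a certifying path for $w$ being special in $C$. That extension is unnecessary and, taken literally, wrong --- the strong-vertex-cover definition asks for a path from a vertex of $C\setminus N_G(S)$ \emph{ending at a vertex of $N_G(S)$}, not at a vertex of $S$; appending $z\in S$ would make $r_i\in N_G(S)$ an interior vertex, violating the requirement that $w_1,\dots,w_{k-1}\notin N_G(S)$. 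The correct move is what your subsequent checks implicitly do anyway: use the un-extended path $u\to w\to y\to\cdots\to r_i$, whose last edge has weight below $\nu_S(r_i)$ precisely because it survives in $G_S$, and whose interior vertices lie in $V(T^i)\setminus\{r_i\}$ and hence outside $N_G(S)$ by Lemma \ref{components}(2). With that correction, your argument is sound and agrees with the paper's.
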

\begin{proof}
    For each $i$, let $T^i$ be the connected component of $G_S$ such that $r_i\in V(T_i)$. Then, by Lemma \ref{components}, $V(T^i)\cap V(T^j) = \emptyset$ if $i\ne j$.

      Now, suppose that $C$ is a strong vertex cover of $G_\omega$.  If $C$ is a minimal vertex cover of $G$, then $S$ is an independent set of $G$, it is trivial. Otherwise, for every vertex $x$ in $C\setminus N_G(S)$, there is a simple path from $x$ to some vertex $y$ in $N_G(S)$ in the form
    $$x = v_1\to v_2\to \cdots \to v_{k-1}\to v_k=y, $$
such that $\omega(v_{k-1}y) < \nu_S(y)$, and $v_1,\ldots, v_{k-1}\notin N_G(S)$. This is obviously a path in $G_S$, meaning that  $x$ is a vertex of some $T^j$ and  $y=r_j$. This  shows that $G_S$ has $k$ connected components $T^1,\ldots, T^k$.

Assume that  $G_S$ has exactly $k$ connected components $T^1,\ldots,T^k$ such that  $r_i\in V(T^i)$. We will prove that $C$ is a strong vertex cover of $G_\omega$.
  If  $C$ is a minimal vertex cover of $G$, then the result is trivial. Otherwise, for any vertex $x\in C\setminus N_G(S)$, $x$ is a vertex of some $T^i$, since $G_S$ has exactly $k$ connected components $T^1,\ldots,T^k$ such that  $r_i\in V(T^i)$ and  $C\setminus N_G(S)\subseteq V(G_S)$. Therefore, there is a simple path from $x$ to $r_i$ in the form
    $$x = v_1\to v_2\to \cdots \to v_{k-1}\to v_k=r_i$$
such that $v_1,\ldots, v_{k-1}\notin N_G(S)$, and  $\omega(v_{k-1}r_i) < \nu_S(r_i)$ by the definition of $G_S$. Therefore, $C$ is a strong vertex cover of $G_\omega$.

    Finally, if $C$ is a strong vertex cover of $G_\omega$, then each  $T^i$ is a connected component of $G_S$ and $V(T^i)\cap N_G(S)=\{r_i\}$.  By Lemma \ref{components}(2),
    $(T^i_\omega, r_i)$ is an increasing weighted tree. We can   directly verify   that, for every $i$ and  every  vertex $x\in (C\setminus N_G(S))\cap V(T^i)$, $x$ is a special vertex of $(T^i_\omega,r_i)$ if and only if $x$ is special of $C$. Therefore,
    $$s(C) = \sum_{i=1}^k s(r_i, T^i_\omega)$$
    and the lemma follows.
\end{proof}

\section{Associated primes}
\label{sec:Second}

In this section, we will find the associated primes of $I(G_\omega)^t$, where $G_\omega$ is an increasing weighted tree. Throughout this section,  we will  assume that $V(G)=\{x_1,\ldots,x_n\}$ and that $\m=(x_1,\ldots,x_n)$ is the homogeneous maximal ideal of $R=K[x_1,\ldots,x_n]$.

For a monomial ideal $I\subseteq R$,  let  $\mathcal{G}(I)$ denote the unique minimal set of its monomial generators. For a positive integer $n$, the notation $[n]$ denotes the set $\{1,2,\dots,n\}$.

We need the following lemma.

\begin{lem}\label{dec-power} Let $I$ be a monomial ideal and let $x^py^q$ be a monomial in $\mathcal{G}(I)$, where $p \geqslant 1$ and $q\geqslant 1$, and $x$ and $y$ are variables. For any   $f$ in $\mathcal{G}(I)$ that satisfies 
\begin{enumerate}
    \item if  $f\ne x^py^q$, then $y\nmid f$,
    \item if  $x\mid f$, then $\deg_x(f)\geqslant p$.
\end{enumerate}
Then $(I^t \colon x^py^q)= I^{t-1}$ for all $t\geqslant 2$.
\end{lem}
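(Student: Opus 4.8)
The plan is to prove the two inclusions separately, the easy one being $I^{t-1} \subseteq (I^t : x^py^q)$, which is immediate since $x^py^q \in \mathcal{G}(I) \subseteq I$, so $x^py^q \cdot I^{t-1} \subseteq I^t$. The content is in the reverse inclusion $(I^t : x^py^q) \subseteq I^{t-1}$. Since all ideals in sight are monomial, it suffices to take a monomial $m$ with $m \cdot x^p y^q \in I^t$ and show $m \in I^{t-1}$. Write $m x^p y^q = g_1 g_2 \cdots g_t \cdot h$ where $g_1,\dots,g_t \in \mathcal{G}(I)$ and $h$ is some monomial (a factorization witnessing membership in $I^t$).

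The key step is a bookkeeping argument on the exponent of $y$. By hypothesis (1), the only generator in $\mathcal{G}(I)$ divisible by $y$ is $x^p y^q$ itself, so among $g_1,\dots,g_t$ the variable $y$ can only be contributed by copies of $x^py^q$. First I would dispose of the case where none of the $g_i$ equals $x^py^q$: then $y \nmid g_1\cdots g_t$, so $y^q \mid h$, hence $m = g_1\cdots g_t \cdot (h/y^q) \cdot x^{-p}\cdots$ — more carefully, $m x^p y^q = g_1\cdots g_t h$ with $y^q \mid h$ gives $m x^p = g_1 \cdots g_t (h/y^q)$, and in particular $g_1 \cdots g_{t-1}$ divides $m x^p$; but I must extract the $x^p$. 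Here hypothesis (2) enters: I would argue that I can choose the factorization so that some $g_i = x^p y^q$ whenever possible, and handle the exponent of $x$ by noting every generator dividing into the product either avoids $x$ or carries $x$-degree at least $p$. The cleanest route: suppose, for contradiction or by a minimal-counterexample choice, that exactly $j \geq 1$ of the $g_i$ equal $x^p y^q$ (WLOG $g_1 = \cdots = g_j = x^p y^q$). Then $(x^p y^q)^j \mid m x^p y^q$, and comparing $y$-degrees forces $j \cdot q \leq q + \deg_y(m)$; comparing the remaining structure, $m x^p = (x^p y^q)^{j-1} g_{j+1} \cdots g_t \cdot h'$ for a suitable monomial $h'$ with $\deg_y(h') \cdot$ — actually once $j \geq 1$ we get $m = (x^py^q)^{j-1} g_{j+1}\cdots g_t \cdot (h'/x^p)$ provided $x^p$ can be cancelled, which follows because either $j \geq 2$ (so $(x^py^q)^{j-1}$ already supplies $x^p$) or, if $j = 1$, we use that $h' \cdot g_{j+1}\cdots g_t$ carries the remaining $x^p$ and split off $x^p$ using (2): if $x \mid g_i$ for some $i > j$ then $\deg_x(g_i) \geq p$, so $g_i = x^p y^q \cdot (\text{stuff})$ is impossible by (1) unless the stuff is coprime to $y$; in all cases the product of the $g_i$ with $i>j$ together with $h'$ has $x$-degree $\geq p$ coming from a single generator or from $h'$, and peeling off $x^p$ from there exhibits $m$ as a product of $t-1$ generators times a monomial, i.e. $m \in I^{t-1}$.

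The main obstacle I anticipate is the $x$-exponent cancellation: after removing the $y$'s there is a "ghost" factor $x^p$ on the left that must be matched on the right without breaking up a generator $g_i$ in the middle, and this is exactly what hypothesis (2) is engineered to guarantee. I would organize this by a short case analysis on how many $g_i$ equal $x^py^q$ and, when that number is $1$ (the delicate case), on whether the leftover $x^p$ is supplied by the "tail" monomial $h$ or by some generator $g_i$ with $x \mid g_i$, invoking $\deg_x(g_i) \geq p$ in the latter subcase to see that dividing $g_i$ by $x^p$ still leaves a genuine monomial and that the total count of generators used drops to $t-1$. Throughout, I would phrase everything in terms of exponent vectors so that "divides" statements are coordinatewise inequalities, which makes the cancellation rigorous and avoids any sleight of hand with negative exponents.
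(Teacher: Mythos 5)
Your proposal is correct and takes essentially the same route as the paper: factor $m x^p y^q = g_1\cdots g_t\cdot h$ with $g_i\in\mathcal{G}(I)$, note by (1) that $y$ can only appear in a $g_i$ if $g_i = x^py^q$, and then split on where $y^q$ and $x^p$ are supplied. The paper organizes this slightly more cleanly: if some $f_j$ is divisible by $y$, then $f_j = x^p y^q$ and one cancels the \emph{whole} factor at once, giving $m = h\prod_{i\ne j}f_i \in I^{t-1}$ immediately, with no $x$-bookkeeping needed; hypothesis (2) is only invoked in the complementary case where no $f_j$ carries $y$ (so $y^q\mid h$), split further according to whether $x^p$ comes from $h$ (then $x^py^q\mid h$, so $m\in I^t$) or from some $f_j$ with $x\mid f_j$ (then $x^p\mid f_j$ by (2), so $x^py^q\mid hf_j$ and $m\in I^{t-1}$). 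Your discussion of "extracting $x^p$" when $j\geqslant 1$ is an unnecessary detour — once a whole copy of $x^py^q$ is present among the generators you cancel it outright and there is nothing left to match — but the underlying cancellation is sound (with your $h'=x^ph$, the division by $x^p$ is automatic), so this is a matter of exposition rather than a gap.
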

\begin{proof} First,  $I^{t-1}\subseteq (I^t \colon x^py^q)$ is clear. Let $g\in (I^t\colon x^py^q)$ be a monomial, then $gx^py^q =hf_1\cdots f_t$, where $h$ is a monomial  and $f_1,\ldots,f_t\in \mathcal{G}(I)$.
If $y\mid f_j$ for some $j\in [t]$, then, by the  assumption (1), $f_j = x^py^q$. Therefore,  $g\in I^{t-1}$. If $y\nmid f_j$ for each $j\in [t]$, then by the expression of $gx^py^q$, we can deduce that $y^q\mid h$. If $x\nmid f_j$ for any $j\in [t]$, then,  $x^p\mid h$. Thus $g\in I^t$.
If $x\mid f_j$ for some $j\in [t]$, then, by the  assumption  (2), $x^p \mid f_j$. Thus $x^py^q \mid hf_j$. Therefore, $g\in I^{t-1}$.
We   complete the proof.
\end{proof}

 For any  $\mathbf{a}= (a_1, \ldots, a_n) \in \mathbb{N}^n$,  define the monomial $x^{\mathbf{a}} := \prod\limits_{i=1}^{n} x_i^{a_i}$ and write $\deg_{x_i}(x^{\mathbf{a}}) = a_i$ for each $i \in [n]$.
 
 \begin{lem}\label{maxIdeal} If $G_\omega$ is an increasing weighted tree, then $\m \notin \ass (I(G_\omega)^t)$ for all $t\geqslant 1$.
\end{lem}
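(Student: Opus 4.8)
The plan is to argue by contradiction. For a monomial ideal $J$ one has $\m\in\ass(J)$ precisely when there is a monomial $m\notin J$ with $x_im\in J$ for every $i\in[n]$; we must show that no such $m$ exists for $J=I(G_\omega)^t$. I would induct on $n=|V(G)|$ and, for each fixed tree, run a secondary induction on $t$. One may assume $G$ has no isolated vertex: if some variable $x_j$ divides no generator of $I(G_\omega)$, then from $x_jm\in I(G_\omega)^t$ one factors $x_j$ out of the "coefficient'' monomial to get $m\in I(G_\omega)^t$, so no witness exists; in particular $n=1$ is settled. The remaining argument splits into the case where $G$ is a star, handled directly, and the case where $G$ is a tree that is not a star, which is reduced to smaller instances.

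For $G=K_{1,r}$ with center $c$, leaves $\ell_1,\dots,\ell_r$ and $w_i=\omega(c\ell_i)$, I would argue combinatorially. For a monomial $\mu$ set $D=\deg_c(\mu)$ and $M_i=\lfloor\deg_{\ell_i}(\mu)/w_i\rfloor$, and let $E(\mu)=\{\mathbf{e}\in\prod_{i}\{0,\dots,M_i\}\mid\sum_i e_i=t\}$; then $\mu\in I(G_\omega)^t$ if and only if $E(\mu)\neq\emptyset$ and $\min_{\mathbf{e}\in E(\mu)}\sum_i w_ie_i\leqslant D$. If $m$ were a witness, then passing from $m$ to $cm$ (which leaves the $M_i$ unchanged) forces $E(m)\neq\emptyset$, hence $\sum_i M_i\geqslant t$, and $\min_{\mathbf{e}\in E(m)}\sum_i w_ie_i=D+1$; passing from $m$ to $\ell_jm$ forces $w_j\mid\deg_{\ell_j}(m)+1$ and yields a tuple $\mathbf{f}^{(j)}$ with $\sum_i f^{(j)}_i=t$, $\sum_i w_if^{(j)}_i\leqslant D$, and $f^{(j)}_i\leqslant M_i$ for $i\neq j$ while $f^{(j)}_j=M_j+1$. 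Now choose $j$ with $w_j$ maximal: decreasing $f^{(j)}_j$ by $1$ and increasing some coordinate $a\neq j$ with $f^{(j)}_a<M_a$ (which exists because $\sum_i M_i\geqslant t$) produces a tuple in $E(m)$ of weight $\leqslant D-w_j+w_a\leqslant D$, contradicting $\min_{\mathbf{e}\in E(m)}\sum_i w_ie_i=D+1$.

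When $G$ is not a star, I would invoke Lemma~\ref{path} to fix a longest path $v=v_0\to v_1\to\cdots\to v_k$ from the root with its five stated properties, and put $w=\omega(v_{k-1}v_k)$ and $g=(v_{k-1}v_k)^w\in\mathcal{G}(I(G_\omega))$. Since $v_k$ is a leaf, $g$ is the only generator of $I(G_\omega)$ divisible by $v_k$, and by Lemma~\ref{path}(5) every generator divisible by $v_{k-1}$ has $v_{k-1}$-degree at least $w$; hence Lemma~\ref{dec-power} applies and $(I(G_\omega)^t:g)=I(G_\omega)^{t-1}$. Writing $G'=G\setminus v_k$ and $J=I(G'_\omega)$, so that $I(G_\omega)^t=\sum_{i=0}^t g^{i}J^{t-i}$ with $v_k$ occurring only inside the powers $g^i$, I would analyze how a witness $m$ can satisfy $v_km\in I(G_\omega)^t$ while $m\notin I(G_\omega)^t$; a short calculation forces $\deg_{v_k}(m)=bw-1$ for an integer $b$ with $1\leqslant b\leqslant t$ and $v_{k-1}^{\,wb}\mid m$. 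If $b\geqslant2$, then $t\geqslant2$ and $g\mid m$, and from $(I(G_\omega)^t:g)=I(G_\omega)^{t-1}$ one checks that $m/g$ witnesses $\m\in\ass(I(G_\omega)^{t-1})$, contrary to the induction on $t$. If $b=1$, then $\deg_{v_k}(m)=w-1$; writing $m=v_k^{\,w-1}m_0$ with $v_k\nmid m_0$, one shows $m_0\notin J^{t}$ while $x_jm_0\in J^{t}$ for every $j\neq k$, so $m_0$ witnesses that $\m'$, the maximal ideal of $R'=K[x_i\mid i\neq k]$, lies in $\ass(J^{t})$; since $G'$ is an increasing weighted tree by Lemma~\ref{sub-tree}, has no isolated vertex (as $\deg_G(v_{k-1})\geqslant2$), and has $n-1$ vertices, this contradicts the induction on $n$.

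The heart of the matter, and the only place where the increasing-weight hypothesis is genuinely used, is the reduction step: one needs the extremal leaf supplied by Lemma~\ref{path} so that the single edge $g=(v_{k-1}v_k)^w$ fulfils both hypotheses of Lemma~\ref{dec-power} --- which is exactly what conditions (3)--(5) of Lemma~\ref{path} provide --- and one must then extract from the lone identity $(I(G_\omega)^t:g)=I(G_\omega)^{t-1}$ both a drop in $t$ in the main case and, in the residual case $\deg_{v_k}(m)=w-1$, a clean passage from $G$ to $G\setminus v_k$. The star case requires no further input beyond careful bookkeeping with the sets $E(\mu)$.
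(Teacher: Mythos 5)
Your proposal is correct, and its non-star branch reproduces the paper's argument: choose a pendant $xy$ (with $y$ a leaf) whose weight is minimal among the edges meeting $x$, use Lemma~\ref{dec-power} to get $(I(G_\omega)^t:(xy)^{\omega(xy)})=I(G_\omega)^{t-1}$, and for a putative witness $m$ either factor out $(xy)^{\omega(xy)}$ to drop $t$ (your case $b\geqslant 2$; the paper phrases this as $a_i\geqslant\omega(x_ix_j)$ combined with minimality of $t_0$), or conclude $\deg_y(m)=\omega(xy)-1$ and pass to $G\setminus y$ to drop $n$. The one substantive divergence is the leaf-selection step. The paper exploits the fact that in an increasing weighted tree the global minimum edge weight is always attained at a pendant edge (since every edge lies on some leaf-to-root path), picks such a pendant, and so treats every tree --- stars included --- with a single uniform reduction, inducting only on $n$. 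You instead route the selection through Lemma~\ref{path}, which excludes stars, so you must dispose of $K_{1,r}$ separately by a direct combinatorial analysis of the exponent tuples $E(\mu)$. That analysis is correct (with the understanding that when $r=1$ the contradiction already arises at the nonexistence of the required $\mathbf{f}^{(j)}$, before the rebalancing step), but it is an avoidable detour: for a star rooted at its center, any leaf with a minimum-weight pendant already satisfies both hypotheses of Lemma~\ref{dec-power}, and the uniform mechanism would have applied there as well.
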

 \begin{proof}
	   	  Let $I=I(G_\omega )$. We will prove the statement by induction on $n=|V(G)|$. If $n=2$, then $I=((x_1x_2)^{\omega (x_1x_2)})$. It is clear that $\m\notin \operatorname{Ass}(I^t)$ for all $t\ge 1$. Now, we assume that $n\ge 3$. Suppose, by contradiction, that $\m\in \operatorname{Ass}(I^t)$ for some $t\ge 1$, and let $t_0$ be the smallest such integer.  Then 
there exists a monomial $x^{\mathbf{a}}\notin I^{t_0}$ such that $\m=(I^{t_0}:x^{\ab})$. Choose a leaf $x_i$  such that $\omega (x_ix_j)=\min\{\omega (e)\mid e\in E(G_\omega )\}$ where $N_{G}(x_i)=\{x_j\}$, then 
	   	\[
  x_ix^{\ab}=hu_1\cdots u_{t_0},
	   	  \]
	   	  where $h$ is a monomial and each $u_{\ell}\in \mathcal{G}(I)$. Note that $x_i\nmid h$, since $x^{\ab}\notin I^{t_0}$, thus $x_i|u_{\ell}$ for some $\ell\in[t_0]$. Since $N_{G}(x_i)=\{x_j\}$, we have $u_{\ell}=(x_ix_j)^{\omega (x_ix_j)}$. Therefore,  $(x_ix_j)^{\omega (x_ix_j)}|x_ix^{\mathbf{a}}$,  which implies $a_i\ge \omega (x_ix_j)-1$ and $a_j\ge \omega (x_ix_j)$. If  $a_i\ge \omega (x_ix_j)$, then $(x_ix_j)^{\omega (x_ix_j)}|x^{\ab}$. Thus  $x^{\ab}=(x_ix_j)^{\omega (x_ix_j)}u$ for some monomial $u$. By Lemma \ref{dec-power}, we obtain that 
	   	  \[
	   	  \m=(I^{t_0}:x^{\ab})=((I^{t_0}:(x_ix_j)^{\omega (x_ix_j)}):u)=(I^{t_0-1}:u).
	   	  \]
	Therefore, $\m \in \operatorname{Ass}(I^{t_0-1})$,  contradicting the minimality of $t_0$. Therefore, $a_i=\omega (x_ix_j)-1$. 

 For each $k\neq i$,  note that $(x_ix_j)^{\omega (x_ix_j)}\nmid x_kx^{\ab}$, and thus
 $x_kx^{\ab}\in I((G \setminus x_i)_{\omega})^{t_0}$.
Also, note that $x_kx^{\bb}\in I((G \setminus x_i)_{\omega})^{t_0}$, where $\bb=(a_1,\ldots,a_{i-1},0,a_{i+1},\ldots,a_n)$. If $x^{\bb}\in I((G \setminus x_i)_{\omega})^{t_0}$, then $x^{\ab}=x_i^{a_i}x^{\bb}\in I^{t_0}$,
contradicting $x^{\ab}\notin I^{t_0}$. Therefore, $x^{\bb}\notin I((G \setminus x_i)_{\omega})^{t_0}$. Therefore, $(x_1,\ldots,x_{i-1},x_{i+1},\ldots,x_n)\in \operatorname{Ass}(I((G \setminus x_i)_{\omega})^{t_0})$.
Since $|V(G_\omega \setminus x_i)| = n-1$, by the inductive hypothesis,   $(x_1,\ldots,x_{i-1},x_{i+1},\ldots,x_n)\notin \operatorname{Ass}(I((G \setminus x_i)_{\omega})^t)$ for all $t\ge 1$, which is a contradiction.
Therefore,  $m\notin \operatorname{Ass}(I^t)$ for all $t\ge 1$.
 \end{proof}

The following example shows that Lemma \ref{maxIdeal} is no longer true if $G_\omega$ is not an increasing weighted tree.

\begin{exm}\label{maxIdealIn} Let $G$ be a path of length $4$ with the vertex set $V=\{x_i\mid i\in [5]\}$ and the edge set $E=\{x_1x_2,x_2x_3,x_3x_4,x_4x_5\}$. Define the weight function $\omega$ on $E$ by:
$$\omega(x_1x_2)=3,\omega(x_2x_3)=\omega(x_3x_4)=2,\omega(x_4x_5)=3.$$
Using  Macaulay2, we can verify that $(x_1,x_2,x_3,x_4,x_5)\in \ass(I(G_\omega)^5)$.
\end{exm}

 For any $x\in V(G_\omega)$,  we define $\mu(x) = \max\{\omega(xy)\mid y\in N_G(x)\}$.

\begin{lem} \label{inc-r-assoc} Let $(G_\omega,v)$ be an increasing weighted tree and let $m\geqslant\mu(v)$. Then  $\ass ((v^m,I(G_\omega))^t) \subseteq \ass ((v^m,I(G_\omega))^{t+1})$ for all $t\geqslant 1$.
\end{lem}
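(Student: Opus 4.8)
The plan is to show that the operation $J \mapsto (J^t : x^{\mathbf a})$ behaves monotonically by exhibiting, for each associated prime $\mathfrak p = (J^t : x^{\mathbf a})$ of $J^t := (v^m, I(G_\omega))^t$, an explicit monomial witnessing that $\mathfrak p \in \ass(J^{t+1})$. The natural candidate is to multiply $x^{\mathbf a}$ by a suitably chosen generator of $J$. Write $J = (v^m, I(G_\omega))$ and let $x^{\mathbf a}$ be a monomial with $(J^t : x^{\mathbf a}) = \mathfrak p$, where $\mathfrak p = (C)$ for some subset $C \subseteq V(G)$ (every associated prime has this form since $\sqrt{J^t}$ is a squarefree monomial ideal whose minimal primes are generated by subsets of variables, and $v \in C$ because $v^m \in J$). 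The goal is to produce a monomial $x^{\mathbf b}$ with $(J^{t+1} : x^{\mathbf b}) = (C)$.

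First I would handle the passage from $t$ to $t+1$ by choosing the multiplier carefully. The first guess $x^{\mathbf b} := v^m x^{\mathbf a}$ works for containment $(C) \subseteq (J^{t+1} : x^{\mathbf b})$: for each $x_i \in C$ we have $x_i x^{\mathbf a} \in J^t$, so $x_i v^m x^{\mathbf a} \in v^m J^t \subseteq J^{t+1}$. The subtle direction is $(J^{t+1} : x^{\mathbf b}) \subseteq (C)$: one must show $x^{\mathbf b} = v^m x^{\mathbf a} \notin J^{t+1}$ and, more generally, that $x_k x^{\mathbf b} \notin J^{t+1}$ for $x_k \notin C$. Here is where I expect to use the increasing-weighted-tree structure together with the hypothesis $m \geqslant \mu(v)$: if $v^m x^{\mathbf a} = h\, u_1 \cdots u_{t+1}$ with $u_\ell \in \mathcal G(J)$, then since $v^m$ contributes a large power of $v$ one of the $u_\ell$ can be taken to be $v^m$ (the bound $m \geqslant \mu(v) \geqslant \omega(vy)$ for all neighbours $y$ of $v$ is exactly what prevents the $v$-power in $v^m x^{\mathbf a}$ from being ``hidden'' inside an edge generator $(vy)^{\omega(vy)}$ without forcing $v^m \mid v^m x^{\mathbf a}$ to leave behind a genuine copy of $v^m$). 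Cancelling that factor would give $x^{\mathbf a} \in J^t$, a contradiction; a parallel argument with $x_k$ adjoined gives $x^{\mathbf a} \notin J^t$ after cancellation when $x_k \notin C$, contradicting $(J^t : x^{\mathbf a}) = (C)$. This mirrors the combinatorial bookkeeping already used in the proof of Lemma \ref{maxIdeal}, and I would invoke Lemma \ref{dec-power} (applied with $x = y = $ something, or directly with the edge variables) to perform the cancellations cleanly.

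A cleaner alternative, which I would pursue if the direct cancellation gets messy, is to induct on $n = |V(G)|$ as in Lemma \ref{maxIdeal}, peeling off a minimal-weight leaf $x_i$ with neighbour $x_j$: away from $x_i$ the ideal $(v^m, I(G_\omega))$ restricts to $(v^m, I((G\setminus x_i)_\omega))$ (still an increasing weighted tree with root $v$, by Lemma \ref{sub-tree}, and $m \geqslant \mu(v)$ is preserved), and one transfers a witness for $\ass(J^t)$ down to the smaller graph, applies the inductive statement, and lifts back.

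The main obstacle I anticipate is the ``$\notin J^{t+1}$'' verification: ensuring that adjoining $v^m$ does not accidentally create membership in a higher power. This is where the increasing condition is essential — without it the analogue fails, as Example \ref{maxIdealIn} shows for the maximal ideal — so the proof must genuinely use Lemma \ref{inc} (no path with a strict decrease followed later by a strict increase) or Lemma \ref{path} to rule out the pathological factorizations. Everything else is routine monomial manipulation of the kind already carried out above.
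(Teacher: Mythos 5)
Your first approach has a concrete gap. You propose $x^{\mathbf b}=v^m x^{\mathbf a}$ and argue that any factorization $v^m x^{\mathbf a}=h\,u_1\cdots u_{t+1}$ with $u_\ell\in\mathcal G(J)$ can be rewritten so that some $u_\ell=v^m$, hence $x^{\mathbf a}\in J^t$. That rewriting step is false. Take $G$ the star with center $v$ and leaves $a,b,c$, weights $\omega(va)=1$, $\omega(vb)=2$, $\omega(vc)=3$; this is an increasing weighted tree with root $v$, $\mu(v)=3$, and with $m=3$ one gets $J=(v^3,\,va,\,v^2b^2)$. Then $ab^2\notin J$, yet $v^3\cdot ab^2=(va)(v^2b^2)\in J^2$, and there is no way to present $v^3ab^2$ as $v^3$ times an element of $J$ since $ab^2\notin J$. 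So the implication $x^{\mathbf a}\notin J^t\Rightarrow v^m x^{\mathbf a}\notin J^{t+1}$ fails, and in fact $(J^{t+1}:v^m)\supsetneq J^t$ in general. The obstruction is that the power of $v$ coming from $v^m$ can be absorbed across \emph{several} edge generators $(vz)^{\omega(vz)}$ simultaneously; the hypothesis $m\geqslant\mu(v)$ bounds each such generator individually but not their product.

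The paper's proof uses a different multiplier: it picks a leaf $y\neq v$ whose unique incident edge $xy$ has minimum weight among all edges of $G$, and shows $\bigl((v^m,I(G_\omega))^{t+1}\colon (xy)^{\omega(xy)}\bigr)=(v^m,I(G_\omega))^t$ by Lemma \ref{dec-power}. The two hypotheses of that lemma are exactly what the pendant and the minimality of $\omega(xy)$ give you (and, when $x=v$, condition (2) is where $m\geqslant\mu(v)$ enters). One then concludes $\ass(J^t)\subseteq\ass(J^{t+1})$ by citing \cite[Lemma 3.3]{HM}, which packages the witness-transfer argument you sketch: if $(J^{t+1}:f)=J^t$ and $(J^t:g)=\p$, then $(J^{t+1}:fg)=\p$. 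So the shape of your argument is right, and your ``cleaner alternative'' of peeling off a minimal-weight leaf is pointing at the correct object; but the multiplier must be that pendant edge generator, not $v^m$, and Lemma \ref{dec-power} must be applied to it as stated rather than with ``$x=y=$ something.''
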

\begin{proof} Since $(G_\omega,v)$ is an increasing weighted tree,  there exists a leaf $y$  such that $y\ne v$ and $\omega (xy)=\min\{\omega (e)\mid e\in E(G_\omega )\}$, where $N_{G}(y)=\{x\}$.

For any $t\geqslant 1$,  by using Lemma \ref{dec-power}, we have $((v^m,I(G_\omega))^{t+1} \colon (xy)^{\omega(xy)}) = (v^m,I(G_\omega))^t$.
 According to  \cite[Lemma 3.3]{HM}, $\ass ((v^m,I(G_\omega))^{t})\subseteq \ass ((v^m,I(G_\omega))^{t+1})$,  as required.
\end{proof}

\begin{lem} \label{star} Let $(G_\omega,v)$ be a star graph with a root $v$ and let $m>\mu(v)$. Then  $\m\in \ass (v^m,I(G_\omega))$.
\end{lem}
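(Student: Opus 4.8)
The plan is to exhibit an explicit monomial $u$ with $(J\colon u)=\m$, where $J=(v^m,I(G_\omega))$, which immediately gives $\m\in\ass(J)$. Write the leaves of the star as $x_1,\ldots,x_n$, so that $N_G(v)=\{x_1,\ldots,x_n\}$, and set $w_i=\omega(vx_i)$; then $\mu(v)=\max\{w_1,\ldots,w_n\}$, and the hypothesis $m>\mu(v)$ reads $m\geqslant w_i+1$ for every $i$. One first records that $\mathcal{G}(J)=\{v^m\}\cup\{(vx_i)^{w_i}\mid i\in[n]\}$: none of these monomials divides another, since $x_i\nmid v^m$, since $w_i<m$ forces $v^m\nmid (vx_i)^{w_i}$, and since $x_j\nmid(vx_i)^{w_i}$ for $j\ne i$.

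Next I would take $u:=v^{m-1}\prod_{i=1}^n x_i^{w_i-1}$. Then $u\notin J$, because $\deg_v(u)=m-1<m$ shows $v^m\nmid u$, while $\deg_{x_i}(u)=w_i-1<w_i$ shows $(vx_i)^{w_i}\nmid u$, so $u$ is divisible by no element of $\mathcal{G}(J)$. On the other hand $vu=v^m\prod_{i=1}^n x_i^{w_i-1}$ is divisible by $v^m$, hence $vu\in J$; and for each $i$ the monomial $x_iu=v^{m-1}x_i^{w_i}\prod_{j\ne i}x_j^{w_j-1}$ is divisible by $(vx_i)^{w_i}=v^{w_i}x_i^{w_i}$, since $m-1\geqslant w_i$, hence $x_iu\in J$. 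Consequently $\m=(v,x_1,\ldots,x_n)\subseteq(J\colon u)$, whereas $(J\colon u)\ne R$ because $u\notin J$; since $(J\colon u)$ is a monomial ideal, it is contained in $\m$, so $(J\colon u)=\m$, and therefore $\m\in\ass(J)$.

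I do not expect any genuine obstacle here: the computation is routine once the witness $u$ is guessed. The only place where the \emph{strict} inequality $m>\mu(v)$ is used is the divisibility $(vx_i)^{w_i}\mid x_iu$, which needs $\deg_v(x_iu)=m-1$ to be at least $w_i$. If instead $m=\mu(v)$ and $w_{i_0}=\mu(v)$ for some $i_0$, this $u$ fails at the index $i_0$, which is consistent with the lemma being stated with a strict inequality (and with the role it will play alongside Lemma \ref{inc-r-assoc}, which only asked for $m\geqslant\mu(v)$).
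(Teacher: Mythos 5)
Your proof is correct and uses essentially the same argument as the paper: you exhibit the identical witness monomial $v^{m-1}\prod_{i} x_i^{w_i-1}$, check it lies outside $(v^m,I(G_\omega))$ by degree counts, and verify that multiplying by $v$ or by any $x_i$ lands it in the ideal, using $m>\mu(v)$ exactly as the paper does.
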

\begin{proof} Let $I=I(G_\omega)$ and $f = v^{m-1} \prod\limits_{x\in V(G) \colon x\ne v} x^{\omega(xv)-1}$.
Then $f\notin (v^m,I)$.  Indeed, since $\deg_v(f) = m-1 $,  $v^m\nmid f$. Note that, for all $x\neq v$, $\deg_x(f) = \omega(xv)-1 $, thus  $(xv)^{\omega(xv)}\nmid f$.  Therefore, $f\notin I$. 

On the other hand, for each $u\in V(G)$, if $u= v$, then  $uf\in (v^m,I)$. Otherwise,  $uf= u^{\omega(uv)}v^{m-1} \prod\limits_{x\in V(G) \colon  x\notin \{u,v\}} x^{\omega(xv)-1}=(uv)^{\omega(uv)}\left(v^{m-\omega(uv)-1} \prod\limits_{x\in V(G) \colon  x\notin \{u,v\}} x^{\omega(xv)-1}\right)\in I$, and therefore $uf\in (v^m,I)$, and $\m\in \ass((v^m,I))$.
\end{proof}

\begin{lem} \label{sG1} Let $(G_\omega,v)$ be an increasing weighted tree and let $m > \mu(v)$. Then  $\m\in \ass((v^m,I(G_\omega))^t)$ for all $t\geqslant s(v,G_\omega)$+1.
\end{lem}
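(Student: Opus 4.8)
The statement is Lemma \ref{sG1}: for an increasing weighted tree $(G_\omega,v)$ with $m>\mu(v)$, we have $\m\in\ass((v^m,I(G_\omega))^t)$ for all $t\geqslant s(v,G_\omega)+1$. The plan is to induct on $n=|V(G)|$ together with a descent on the structure of $G_\omega$ using the longest path produced by Lemma \ref{path}. The base case $n=2$ (and more generally the case where $G$ is a star with root $v$) is handled by Lemma \ref{star}: there $s(v,G_\omega)=0$, so we only need $\m\in\ass((v^m,I(G_\omega)))$, which is exactly what Lemma \ref{star} gives; and then Lemma \ref{inc-r-assoc} propagates this up to all $t\geqslant 1$.

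For the inductive step, assume $G$ is not a star with root $v$, and fix a longest path $v=v_0\to v_1\to\cdots\to v_k$ from $v$ satisfying all five conclusions of Lemma \ref{path}. The idea is to "peel off" the pendant structure at $v_{k-1}$. Let $w=v_{k-1}$, $p=v_{k-2}$, and let $L=L_G(w)\ni v_k$ be the set of leaves attached to $w$ (by Lemma \ref{path}(3), $N_G(w)=\{p\}\cup L$). Set $a=\omega(wp)$; by Lemma \ref{path}(4)--(5) we have $\omega(wx)\leqslant a$ for every $x\in L$, and $\omega(wv_k)\leqslant\omega(wx)$ for all $x\in N_G(w)$. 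The two cases to distinguish, mirroring the definition of a special vertex, are whether $\omega(pv_{k-2}')$-type comparisons make $w$ special — more precisely, whether $\omega(wv_k)=\omega(wp)$ (equivalently $a$ equals the minimal weight at $w$, forcing all pendant weights at $w$ to equal $a$) or $\omega(wv_k)<\omega(wp)$. I expect that deleting the leaf $v_k$ (or all of $L$, or the whole "broom" $\{w\}\cup L$) yields a smaller increasing weighted tree $G'_\omega$ with root $v$, and that $s(v,G'_\omega)$ drops by exactly $1$ in the special case and stays the same otherwise; one then builds a witness monomial for $\m\in\ass((v^m,I(G_\omega))^t)$ out of a witness for the smaller tree at level $t-1$ (special case) or $t$ (non-special case), multiplied by a suitable power of $v_k$ and $w$ to saturate the newly reintroduced generators $(wx)^{\omega(wx)}$.

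Concretely, I would argue as follows. In the \emph{non-special} case ($\omega(wv_k)<a$): let $G'=G\setminus\{v_k\}$. By Lemma \ref{sub-tree}, $(G'_\omega,v)$ is an increasing weighted tree, and one checks that the special vertices of $(G'_\omega,v)$ are exactly those of $(G_\omega,v)$ (removing the leaf $v_k$ does not destroy or create the equality $\omega(uw')=\omega(w'x')$ along any path to $v$ through a vertex $w'\neq v_k$, since $v_k$ is a leaf and $w=v_{k-1}$ still has $\omega(wv_k')<a$ for its smallest remaining pendant by Lemma \ref{path}(5)), so $s(v,G'_\omega)=s(v,G_\omega)$. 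By induction $\m'=(x_i\mid x_i\neq v_k)\in\ass((v^m,I(G'_\omega))^t)$ for $t\geqslant s(v,G'_\omega)+1$; pick a witness monomial $g$ with $\m'=((v^m,I(G'_\omega))^t:g)$ and $g\notin(v^m,I(G'_\omega))^t$. Set $g^\ast=g\cdot v_k^{\omega(wv_k)-1}$. One then verifies directly that $g^\ast\notin(v^m,I(G_\omega))^t$ (the only new generator divisible by $v_k$ is $(wv_k)^{\omega(wv_k)}$, and $\deg_{v_k}(g^\ast)=\omega(wv_k)-1$ kills it; a power of $v^m$ or of any $(w'x')^{\omega(w'x')}$ with $w',x'\neq v_k$ would already appear in $g$, contradicting $g\notin(v^m,I(G'_\omega))^t$ — here one uses that $\deg_w(g)$ is controlled, which needs the minimality clauses carefully), while $x_\ell g^\ast\in(v^m,I(G_\omega))^t$ for every vertex $x_\ell$: for $x_\ell\neq v_k$ this follows from $x_\ell g\in(v^m,I(G'_\omega))^t$ since that product does not involve $v_k$; and for $x_\ell=v_k$ one writes $v_k g^\ast=(wv_k)^{\omega(wv_k)}\cdot(g/w^{\omega(wv_k)})$ after checking $\deg_w(g)\geqslant\omega(wv_k)$, which one arranges by choosing the witness $g$ for the smaller tree with a large enough power of $w$ (replacing $g$ by $w^Ng$ for suitable $N$ keeps it a witness because $w$ is not a leaf of $G'$ adjacent to $v$... — more precisely one uses that $(wp)^a$ divides nothing forced, so multiplying by $w$ stays outside the power; this is the kind of bookkeeping that should be spelled out). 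In the \emph{special} case ($\omega(wv_k)=a$, so $w$ is special and also $\omega(wx)=a$ for all $x\in L$): let $G'=G\setminus(\{w\}\cup L)$, which is an increasing weighted tree with root $v$ (it is a subtree containing $v$), and show $s(v,G'_\omega)=s(v,G_\omega)-1$ — the vertex $w$ is no longer present, and no path to $v$ through a surviving vertex gains or loses the special equality. By induction $\m'\in\ass((v^m,I(G'_\omega))^{t-1})$ for $t-1\geqslant s(v,G'_\omega)+1=s(v,G_\omega)$, i.e. for $t\geqslant s(v,G_\omega)+1$; take a witness $g$ for level $t-1$ and set $g^\ast=g\cdot w^{a-1}\prod_{x\in L}x^{a-1}$, using the extra factor $(wp)^a$-worth of room: then $x_\ell g^\ast\in(v^m,I(G_\omega))^t$ for all $\ell$ (for $x_\ell\in L$ use $(wx_\ell)^a\mid x_\ell g^\ast$ and factor it out, landing in $I(G_\omega)\cdot(v^m,I(G'_\omega))^{t-1}$; for $x_\ell=w$ use $(wp)^a\mid wg^\ast$ since $\deg_p(g)\geqslant a$ can be arranged; for $x_\ell\in V(G')$ use $x_\ell g\in(v^m,I(G'_\omega))^{t-1}$ and multiply by any edge at $w$), while $g^\ast\notin(v^m,I(G_\omega))^t$ by a degree count at the variables $w$ and $x\in L$.

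The main obstacle, as usual in these $\ass$-of-powers arguments, is the verification that the constructed monomial $g^\ast$ stays \emph{outside} the relevant power $(v^m,I(G_\omega))^t$ — the "$\in$" direction is routine, but ruling out every factorization into $t$ generators requires tight control of $\deg_w(g^\ast)$, $\deg_{v_k}(g^\ast)$, and the degrees at the leaves in $L$, and this is precisely where conclusions (2)--(5) of Lemma \ref{path} (especially the minimality $\omega(wv_k)\leqslant\omega(wx)$ and $\omega(wx)\leqslant\omega(wp)$, and that $p=v_{k-2}$ is the unique non-leaf neighbour of $w$) get used. A secondary subtlety is the clean bookkeeping of $s(v,G'_\omega)$ versus $s(v,G_\omega)$ under the deletion; I would isolate this as a short separate claim, proved by matching special paths in $G'_\omega$ with special paths in $G_\omega$ — every special path in either tree either avoids the deleted vertices entirely (bijection) or, in the special case, is the path $v_k\to w\to p\to\cdots\to v$ that is removed (accounting for the $-1$). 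Once these two points are settled, the inductive step and the invocation of Lemma \ref{inc-r-assoc} to spread the conclusion to all $t\geqslant s(v,G_\omega)+1$ finish the proof.
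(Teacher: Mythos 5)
Your overall plan — induct on $|V(G)|$, anchor on a longest path $v=v_0\to\cdots\to v_k$ from Lemma \ref{path}, split on whether $\omega(wv_k)=\omega(wp)$ or $\omega(wv_k)<\omega(wp)$ where $w=v_{k-1}$, $p=v_{k-2}$, and build a witness for $G_\omega$ by multiplying a witness for a smaller tree by a suitable monomial — is the same strategy the paper uses. But there are two concrete errors in the witness construction that break the argument as written. In the non-special case, to show $v_kg^\ast\in(v^m,I(G_\omega))^t$ you need $\deg_w(g)\geqslant\omega(wv_k)$, and you propose to "arrange" this by "replacing $g$ by $w^Ng$". That cannot work: by definition of witness, $\m'=((v^m,I')^t:g)$ with $w\in\m'$, so $wg\in(v^m,I')^t$ already, hence $w^Ng\in(v^m,I')^t$ for every $N\geqslant 1$ — the opposite of being a witness. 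The paper instead \emph{extracts} the factor $w^{\omega(wv_k)}$ from the given witness $f$ by a structural argument: if there is another leaf $r$ at $w$ in $G'$, a factorization of $fr\in(v^m,I')^t$ forces a generator $(wr)^{\omega(wr)}$ and hence $w^{\omega(wv_k)}\mid f$; otherwise $\omega(wv_k)$ is strictly minimal at $w$ and a factorization of $wf\in(v^m,I')^t$ forces a generator with $w$-degree $\geqslant\omega(wv_k)+1$, again giving $w^{\omega(wv_k)}\mid f$.

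In the special case, the monomial $g^\ast=g\cdot w^{a-1}\prod_{x\in L}x^{a-1}$ has $\deg_w(g^\ast)=a-1$ (since $g$ is supported away from $\{w\}\cup L$). For any $x_\ell\in L$, $\deg_w(x_\ell g^\ast)=a-1<a$, so $(wx_\ell)^a\nmid x_\ell g^\ast$; and $(wx_\ell)^a$ is the only generator of $(v^m,I(G_\omega))$ involving $x_\ell$. So $x_\ell g^\ast\in(v^m,I(G_\omega))^t$ would force $g^\ast\in(v^m,I(G_\omega))^t$, exactly what you need to rule out — the construction fails. (You also assert $\deg_p(g)\geqslant a$ "can be arranged" with no mechanism.) The paper avoids both problems at once by deleting only the single leaf $v_s$ and taking $g_1=f_1\cdot(v_{s-2}v_{s-1})^{\omega(v_{s-2}v_{s-1})}\cdot v_s^{\omega(v_{s-1}v_s)-1}$: the full edge power built into the witness supplies both $\deg_{v_{s-1}}\geqslant a$ and the extra $\deg_{v_{s-2}}$ for free. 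Verifying that $g_1\notin(v^m,I(G_\omega))^{t_0}$ then uses Lemma \ref{dec-power}, for which one needs $\omega(v_{s-1}v_{s-2})\leqslant\omega(zv_{s-2})$ for \emph{all} $z\in N_G(v_{s-2})$ including leaves — a fact the paper proves by a separate contradiction (under the hypothesis that no pendant falls into the other case), and which your proposal does not address.
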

\begin{proof}  Let $I = I(G_\omega)$. We will prove the statement by induction on $n=|V(G)|$.  If  $G$ is a star graph with a root  $v$, then  the result follows from Lemmas \ref{inc-r-assoc} and  \ref{star}.

Assume that $n>2$ and $G$ is not a star graph. By Lemma \ref{inc-r-assoc}, it suffices to show that $\m\in \ass ((v^m,I(G_\omega))^{t_0})$, where $t_0=s(v,G_\omega)+1$.
For any  $x\in V(G)$, let $L_G(x) = \{u\in N_G(x)\mid \deg_G(u)=1\}$.
We consider the following two cases.

{\it Case $1$}: $G$ has a pendant edge $xy$ with $\deg_G(y)=1$, satisfying the following four conditions:
\begin{enumerate}
    \item $y\ne v$,
    \item $\omega(xy) \leqslant \omega(xz)$ for all $z\in N_G(x)$,
    \item $s(v,G_\omega)=s(v,G'_{\omega})$, where $G'=G\setminus y$, and
    \item either  there exists an $r\in L_G(x)\setminus\{y,v\}$, or $\omega(xy) < \omega(xz)$ for all $z\in N_G(x)\setminus\{y\}$.
\end{enumerate}
In this case, let $I'=I(G'_\omega)$ and $\m' = (z\mid z\ne y)$. Since $(G'_\omega,v)$ is an increasing weighted tree, the induction hypothesis implies that $\m'\in  \ass((v^m,I')^{t_0})$ by the condition (3).
Therefore,   there is a monomial $f\notin (v^m,I')^{t_0}$ and  $y\nmid f$ such that $\m' = ((v^m,I')^{t_0}\colon f)$.
Let $g = fy^{\omega(xy)-1}$, then $\deg_y(g)=\omega(xy)-1$. Thus, $g\notin (v^m,I)^{t_0}$ since $\m' = ((v^m,I')^{t_0}\colon f)$. Now, we will prove that $\m = ((v^m,I)^{t_0}\colon g)$. 

For any $z\ne y$, since $\m'\in  \ass((v^m,I')^{t_0})$,
 we have  $fz\in (v^m,I')^{t_0}$. Therefore,  $gz = (fz)y^{\omega(xy)-1}\in (v^m,I')^{t_0}\subseteq (v^m,I)^{t_0}$. This implies that $z\in  ((v^m, I)^{t_0} : g)$.

Next, we will show that $gy \in (v^m,I)^{t_0}$.  To do so, it is sufficient to  show that $f$ can be written as  $f = x^{\omega(xy)}f'$ where $f'$ is a monomial in $(v^m,I')^{t_0-1}$. We consider the following two subcases:

(i) If there exists $r\in L_G(x)\setminus\{y,v\}$, then $fr\in (v^m, I')^{t_0}$, since $\m' = ((v^m,I')^{t_0}\colon f)$.
We can write $fr$ as $fr = \gamma f_1f_2\cdots f_{t_0}$ where $\gamma$ is a monomial and $f_1,\ldots,f_{t_0}\in \mathcal{G}((v^m,I'))$. Note that since $f\notin (v^m,I')^{t_0}$, it is easy to see  that $r|f_j$ for some $j\in [t_0]$.
Without loss of generality, we can assume that $j=t_0$. By the choice of $r$,  $f_{t_0} = (xr)^{\omega(xr)}$. By the condition $(2)$, we have that  $\omega(xy) \leqslant \omega(xr)$. Therefore, $f = x^{\omega(xy)}f'$ and
$f' = \gamma x^{\omega(xr)-\omega(xy)}r^{\omega(xr)} f_1\cdots f_{t_0-1}\in (v^m,I')^{t_0-1}$.

(ii) If $\omega(xy) < \omega(xz)$ for all $z\in N_G(x)\setminus \{y\}$, then $xf\in  (v^m, I')^{t_0}$ since $\m' = ((v^m,I')^{t_0}\colon f)$. We can write $xf$ as $xf= \gamma' f'_1f'_2\cdots f'_{t_0}$ where $\gamma'$ is a monomial and $f'_1,\ldots,f'_{t_0}\in \mathcal{G}((v^m,I'))$. It is easy to see that  $x| f'_j$ for some $j\in [t_0]$, We  can  also assume that $j=t_0$, so $f'_{t_0} = (xz)^{\omega(xz)}$ for some $z\in N_G(x)\setminus \{y\}$, or $f'_{t_0} = x^m$ (this  case can be  occur if $x=v$). In both cases,   $\deg_x(f'_{t_0})\geqslant \omega(xy)+1$, Thus,  $f'_{t_0}$ can be written as  $f'_{t_0} = hx^{\omega(xy)+1}$, where $h$ is a monomial. Therefore, $f = x^{\omega(xy)}f'$ and $f' = \gamma' h x f'_1\cdots f'_{t_0-1}\in (v^m,I')^{t_0-1}$.

In both subcases, we have $g y = f' (xy)^{\omega(xy)} \in (v^m, I)^{t_0}$,  implying that $y\in  ((v^m, I)^{t_0} : g)$.
Therefore, $\mathfrak{m} \in \ass((v^m, I)^{t_0})$,  so the statement holds.

{\it Case $2$}: Assume that no pendant of $G$  satisfies Case $1$. By Lemma \ref{path}, there is a longest path $\P: v= v_0\to v_1\to\cdots\to v_{s-1}\to v_s$ in $G$ from the root $v$ such that
\begin{itemize}
	\item[(5)] $s \geqslant 2$;
	\item[(6)] $v_s$ is a leaf;
	\item[(7)] if $u\in N_G(v_{s-2})\setminus L_G(v_{s-2})$, then $\omega(v_{s-1}v_{s-2})\leqslant \omega(v_{s-2}u)$;
	\item[(8)] $\omega(v_{s-1}v_s) \leqslant \omega(v_{s-1}z)$ for all $z\in N_G(v_{s-1})$;
	\item[(9)] $N_G(v_{s-1})$ has only one non-leaf $v_{s-2}$;
	\item[(10)] $\omega(v_{s-1}z) \leqslant \omega(v_{s-1}v_{s-2})$ for all $z\in N_G(v_{s-1})$.
\end{itemize}
Note that $v \notin L_G(v_{s-1}) \cup \{v_{s-1}\}$ and the pendant  $v_{s-1}v_s$ satisfies the  conditions (1) and (2). In this case, we first prove that condition (3) is equivalent to condition (4).

(3)$\implies$(4): If  $L_G(v_{s-1})=\{v_s\}$, then  $N_G(v_{s-1})\setminus \{v_s\}=\{v_{s-2}\}$  by the condition (5) and (9). By  the  condition (3),  $\omega(v_{s-1}v_s) < \omega(v_{s-2}v_{s-1})$.

(4)$\implies$(3): Let   $G''=G\setminus v_s$.  If $\omega(v_{s-1}v_s) < \omega(v_{s-1}z)$ for all $z\in N_G(v_{s-1})\setminus\{v_s\}$, then $\omega(v_{s-1}v_s) < \omega(v_{s-2}v_{s-1})$,  implying that $s(v,G''_{\omega})=s(v,G_\omega)$. Otherwise, there exists a $z\in N_G(v_{s-1})\setminus\{v_s\}$ such that $\omega(v_{s-1}v_s) \geqslant \omega(v_{s-1}z)$. Thus, again, using the condition (4),
$L_G(v_{s-1})\setminus\{v_s,v\}\neq \emptyset$.
Using the conditions (8) and (10), we can deduce that $\omega(v_{s-1}v_s)=\omega(v_{s-1}z)$ for some $z\in L_G(v_{s-1})\setminus\{v_s,v\}$. Therefore, $s(v,G''_{\omega})=s(v,G_\omega)$.

Below, we only consider cases where the pendant $v_{s-1}v_s$  does not satisfy conditions (4). That is,  $L_G(v_{s-1})=\{v_s\}$, since $s \geqslant 2$, and there is a $z\in N_G(v_{s-1})\setminus\{v_s\}$  such that $\omega(v_{s-1}v_s)\geqslant \omega(v_{s-1}z)$.  By the condition (9), $N_G(v_{s-1})=\{v_{s-2},v_s\}$. Thus $\omega(v_{s-1}v_{s})=\omega(v_{s-1}v_{s-2})$ by the condition (8).
Therefore,  $s(v,G_{\omega})=s(v,G'_\omega)+1$. 

First, we will show that, for the  longest path $\P$,   $\omega(v_{s-1}v_{s-2}) \leqslant \omega(zv_{s-2})$  for all  $z\in N_G(v_{s-2})$.

 The case $L_G(v_{s-2})=\emptyset$ follows from  the condition $(7)$.
Now, assume that $L_G(v_{s-2})\neq \emptyset$.   Using the condition $(7)$ again,  it suffices to show  that $\omega(v_{s-1}v_{s-2}) \leqslant \omega(zv_{s-2})$ 
for  all  $z\in L_G(v_{s-2})$. Suppose for contradiction that there is an  $\alpha \in L_G(v_{s-2})$ such that $\omega(v_{s-2}v_{s-1})> \omega(v_{s-2}\alpha)$. Moreover, we
can assume that  $\omega(zv_{s-2})\geqslant \omega(v_{s-2}\alpha)$ for all $z\in L_G(v_{s-2})$.
Then, by the condition  (7), we have that $\omega(v_{s-2}u)>\omega(v_{s-2}\alpha)$ for all $u\in N_G(v_{s-2})\setminus L_G(v_{s-2})$. This implies that 
$s(v,G_\omega)=s(v,(G\setminus \alpha)_{\omega})$. Therefore,  the pendant $wv_{s-2}$ satisfies  the  four conditions of Case $1$, which is a contradiction.

Next, let  $I''=I(G''_\omega)$ and $\m'' = (z\mid z\ne v_{s})$. Since $(G''_\omega,v)$ is an increasing weighted tree,  by  the induction hypothesis, $\m'' \in \ass((v^m,I'')^{t_0-1})$,  since $s(v,G''_\omega)=s(v,G_{\omega})-1$. Therefore, there is a monomial  $f_1$ such that  $v_{s}\nmid f_1$ and $\m'' = ((v^m,I'')^{t_0-1}\colon f_1)$. Let  $g_1= f_1(v_{s-2}v_{s-1})^{\omega(v_{s-2}v_{s-1})}v_{s}^{\omega(v_{s-1}v_{s})-1}$. We  will prove that  $g_1\notin (v^m,I)^{t_0}$ and that $\m = ((v^m,I)^{t_0}\colon g_1)$.

If  $g_1\in (v^m,I)^{t_0}$, then  $(v_{s-1}v_{s})^{\omega(v_{s-1}v_{s})}\nmid g_1$, since  $\deg_{v_{s}}(g_1) = \omega(v_{s-1}v_{s})-1$. This implies that  $g_1 \in (v^m,I'')^{t_0}$.
By the expression of $g_1$, we have $f_1(v_{s-2}v_{s-1})^{\omega(v_{s-2}v_{s-1})} \in (v^m,I'')^{t_0}$.
Therefore, \[
 f_1\in ((v^m,I'')^{t_0} \colon (v_{s-2}v_{s-1})^{\omega(v_{s-2}v_{s-1})}) = (v^m,I'')^{t_0-1},
 \]
where the above equality holds because of the fact that $\omega(v_{s-2}v_{s-1}) \leqslant \omega(v_{s-2} z)$  for all  $z\in N_G(v_{s-2})$ and Lemma \ref{dec-power}. 
This contradicts the fact that  $f_1\notin  (v^m,I'')^{t_0-1}$. Therefore,  $g_1\notin (v^m,I)^{t_0}$.

For any $\beta \in V(G)$, if $\beta \ne v_{s}$, then
$\beta g_1 =[(\beta f_1)(v_{s-2}v_{s-1})^{\omega(v_{s-2}v_{s-1})}]v_s^{\omega(v_{s-1}v_{s})-1} \in (v^m,I)^{t_0}$
since  $\m'' = ((v^m,I'')^{t_0-1}\colon f_1)$. Otherwise, we have 
\begin{align*}
	v_{s}g_1& =f_1(v_{s-2}v_{s-1})^{\omega(v_{s-2}v_{s-1})}v_{s}^{\omega(v_{s-1}v_{s})}\\
	&= [(f_1v_{s-2})(v_{s-1}v_{s})^{\omega(v_{s-1}v_{s})}]v_{s-1}^{\omega(v_{s-2}v_{s-1})-\omega(v_{s-1}v_{s})}v_{s-2}^{\omega(v_{s-2}v_{s-1})-1}  \in (v^m,I)^{t_0}.
\end{align*}
Therefore, $\m = ((v^m,I)^{t_0}\colon g_1)$ and  $\m \in \ass((v^m,I)^{t_0})$. We  have completed  the proof.
\end{proof}

\begin{lem} \label{sG2} Let $(G_\omega,v)$ be an increasing weighted tree and let $m > \mu(v)$. Then $\m\in \ass((v^m,I(G_\omega))^t)$ if and only if $t\geqslant s(v,G_\omega)$+1.
\end{lem}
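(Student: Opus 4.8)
The statement combines a forward implication, already essentially established, with a converse that must be proved fresh. The plan is to handle them separately. For the direction $t \geqslant s(v,G_\omega)+1 \implies \m \in \ass((v^m,I(G_\omega))^t)$, there is nothing new to do: this is exactly Lemma~\ref{sG1}. So the work is entirely in the contrapositive of the other direction, namely showing that if $t \leqslant s(v,G_\omega)$ then $\m \notin \ass((v^m,I(G_\omega))^t)$. Because of the monotonicity guaranteed by Lemma~\ref{inc-r-assoc}, it suffices to prove the single statement $\m \notin \ass((v^m,I(G_\omega))^{s(v,G_\omega)})$; then all smaller powers follow automatically. (If $s(v,G_\omega)=0$ there is nothing to prove, since the power is $I^0 = R$ or, reading carefully, the statement is vacuous; the real content is $s(v,G_\omega)\geqslant 1$.)

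**The converse by induction on $n = |V(G)|$.** I would mirror the inductive architecture of Lemma~\ref{sG1}: induct on the number of vertices, using the very same case split (Case~1: there is a suitable pendant $xy$ that can be deleted without changing $s(v,\cdot)$; Case~2: no such pendant, so by Lemma~\ref{path} choose the longest path $v=v_0\to\cdots\to v_s$ and analyze the terminal pendant $v_{s-1}v_s$). The base case is a star graph with root $v$: here $s(v,G_\omega)=0$ (no vertex can sit in the ``$w_2$'' slot of a length-$\geqslant 3$ path), so there is nothing to prove. For the inductive step, suppose $\m \in \ass((v^m,I)^{t})$ with $t = s(v,G_\omega)$ and derive a contradiction. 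Pick a monomial $f \notin (v^m,I)^t$ with $\m = ((v^m,I)^t : f)$, and choose the minimal-weight leaf $y$ with neighbour $x$ as in earlier lemmas. Using the divisibility expansion $yf = h u_1\cdots u_t$ and the fact that $y\nmid h$ (since $f\notin (v^m,I)^t$), force $(xy)^{\omega(xy)}\mid yf$, so $\deg_y f \geqslant \omega(xy)-1$ and $\deg_x f \geqslant \omega(xy)$. As in Lemma~\ref{maxIdeal}, if $\deg_y f \geqslant \omega(xy)$ one can cancel $(xy)^{\omega(xy)}$ via Lemma~\ref{dec-power} and drop to a smaller power, contradicting whichever minimality is in play; so $\deg_y f = \omega(xy)-1$ exactly. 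Then set $\bb$ equal to $\ab$ with the $y$-coordinate zeroed out and pass to $G' = G\setminus y$: one checks $x^{\bb}\notin (v^m, I(G'_\omega))^{t}$ and $\m' = ((v^m,I(G'_\omega))^{t}:x^{\bb})$ where $\m'=(z\mid z\neq y)$, so $\m'\in\ass((v^m,I(G'_\omega))^{t})$. In Case~1 we have $s(v,G'_\omega)=s(v,G_\omega)=t$, and the inductive hypothesis gives $\m'\notin\ass((v^m,I(G'_\omega))^{t'})$ for $t' < s(v,G'_\omega)+1 = t+1$, i.e. for $t'\leqslant t$ — contradiction. In Case~2, removing the leaf $v_s$ decreases $s$ by one, $s(v,G''_\omega) = t-1$ with $G''=G\setminus v_s$, and the same passage gives $\m''\in\ass((v^m,I(G''_\omega))^{t})$ while the inductive hypothesis forbids it for $t \geqslant s(v,G''_\omega)+1 = t$ — again a contradiction.

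**The anticipated obstacle.** The delicate point is matching the bookkeeping of the case split to the induction: in Case~2 I am decreasing $s(v,\cdot)$ by exactly one when deleting $v_s$, and this requires knowing that $\omega(v_{s-1}v_s)=\omega(v_{s-1}v_{s-2})$ (so $v_{s-1}$ really is a special vertex that disappears), which is precisely the normalization forced by conditions (5)–(10) of Lemma~\ref{path} in the ``bad pendant'' subcase of Lemma~\ref{sG1}; I would reuse that analysis verbatim. The genuinely subtle step is the one where I want to say ``if $\deg_y f \geqslant \omega(xy)$ then we descend to a smaller power'': when the ideal is $(v^m, I(G_\omega))$ rather than $I(G_\omega)$ alone, I must make sure the colon manipulation of Lemma~\ref{dec-power} still applies — it does, because the hypotheses of Lemma~\ref{dec-power} concern only the generator $(xy)^{\omega(xy)}$ and the behavior of the other generators in $x,y$, and the extra generator $v^m$ involves neither $x$ nor $y$ (here $y\neq v$ and $x\neq v$ is forced by choosing $y$ of minimal weight in a non-star tree, or handled trivially when $x=v$). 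A second bookkeeping hazard: one must verify $x^{\bb}\notin (v^m,I(G'_\omega))^{t}$, which follows because $x^{\ab}=y^{\omega(xy)-1}x^{\bb}$ and membership $x^{\bb}\in(v^m,I(G'_\omega))^{t}$ would force $x^{\ab}\in (v^m,I(G_\omega))^{t}$. Once these two routine-but-fiddly verifications are in place, the induction closes and, combining with Lemma~\ref{sG1} and the monotonicity of Lemma~\ref{inc-r-assoc}, the equivalence follows.
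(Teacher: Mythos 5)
Your reduction to the contrapositive, the use of Lemma~\ref{inc-r-assoc} to reduce to the single power $t=s(v,G_\omega)$, the star-graph base case, and the Case~1 argument are all in order and essentially match the paper's strategy (the paper instead lets $k$ be the minimal power with $\m\in\ass$ and shows $k\geqslant s(v,G_\omega)+1$, which is equivalent). The gap is in Case~2, and it is a genuine one, not a bookkeeping slip. You write that after deleting $v_s$ you obtain $\m''\in\ass\bigl((v^m,I(G''_\omega))^{t}\bigr)$ with $s(v,G''_\omega)=t-1$, and that ``the inductive hypothesis forbids it for $t\geqslant s(v,G''_\omega)+1=t$.'' That is backwards: the inductive hypothesis \emph{asserts} membership precisely for exponents $\geqslant s(v,G''_\omega)+1=t$. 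So $\m''\in\ass\bigl((v^m,I'')^{t}\bigr)$ is exactly what the lemma predicts should happen, and no contradiction results. Dropping only one power when deleting $v_s$ is simply not enough when $s$ also drops by one.

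What is needed to close Case~2 is an \emph{extra} descent: in precisely those subcases where deleting $v_s$ decreases $s(v,\cdot)$ by one (the paper's cases (iii) and (v), where $\omega(v_{s-1}v_s)=\omega(v_{s-2}v_{s-1})$, $L_G(v_{s-1})=\{v_s\}$, and no leaf at $v_{s-2}$ rescues Case~1), one must show that the witness $f$ satisfies $\deg_{v_{s-2}}(f)\geqslant\omega(v_{s-2}v_{s-1})$ as well as $\deg_{v_{s-1}}(f)\geqslant\omega(v_{s-1}v_s)$. This lets you factor $f=f_1f_2$ with $f_1=v_{s-2}^{\omega(v_{s-2}v_{s-1})}v_{s-1}^{\omega(v_{s-1}v_s)}$, and since $v_{s-1}$ is a leaf of $G''$ with $\omega(v_{s-2}v_{s-1})\leqslant\omega(v_{s-2}z)$ for all $z\in N_G(v_{s-2})$, Lemma~\ref{dec-power} applied inside $(v^m,I'')$ yields $\bigl((v^m,I'')^{k}:f_1\bigr)=(v^m,I'')^{k-1}$, hence $\m''\in\ass\bigl((v^m,I'')^{k-1}\bigr)$. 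Only then does the inductive hypothesis give $k-1\geqslant s(v,G''_\omega)+1=s(v,G_\omega)$, i.e.\ $k\geqslant s(v,G_\omega)+1$. Without this second drop your induction does not close, so the proof as written is incomplete.
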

\begin{proof} Let $I=I(G_\omega)$. By Lemma \ref{sG1}, it suffices to show that if $\m\in\ass((v^m,I)^t)$, then $t\geqslant s(v,G_{\omega})+1$. 
 We now prove this assertion by induction on $n=|V(G)|$. 
 If  $G$ is a star graph with a root  $v$, then  $s(v,G_\omega)=0$ and the assertion follows from Lemmas \ref{inc-r-assoc} and  \ref{star}.
 
 Assume that $n > 2$ and $G$ is not a star graph. Let $k=\min\{\ell\mid \m\in \ass((v^m,I)^{\ell})\}$ and let $f$ be a monomial in $R$ such that $\m = ((v^m,I)^k \colon f)$. 
We will prove that $k\geqslant s(v,G_\omega)+1$.

By Lemma \ref{path},  there exists a longest path $v= v_0\to v_1\to\cdots\to v_{s-1}\to v_s$ in $G$ from the root $v$ such that 
\begin{enumerate}
	\item $s\geqslant 2$;
	\item $v_s$ is a leaf;
	\item if $z\in N_G(v_{s-2})$ is a non-leaf, then $\omega(v_{s-1}v_{s-2})\leqslant \omega(v_{s-2}z)$;
	\item $N_G(v_{s-1})$ has only one non-leaf $v_{s-2}$;
	\item $\omega(v_{s-1}z) \leqslant \omega(v_{s-1}v_{s-2})$ for all $z\in N_G(v_{s-1})$;
	\item $\omega(v_{s-1}v_s) \leqslant \omega(v_{s-1}z)$ for all $z\in N_G(v_{s-1})$.
\end{enumerate}
First, we will prove the following  three  claims:

{\it Claim $1$}: $(v_{s-1}v_{s})^{\omega(v_{s-1}v_{s})} \nmid f$. 

If $(v_{s-1}v_{s})^{\omega(v_{s-1}v_{s})} \mid f$, then $f = g(v_{s-1}v_{s})^{\omega(v_{s-1}v_{s})}$, where $g$ is a monomial. Together with the condition  $(6)$ and  Lemma \ref{dec-power},   this yields
\[
((v^m,I)^k \colon (v_{s-1}v_{s})^{\omega(v_{s-1}v_{s})}) = (v^m,I)^{k-1}.
\]
Therefore,
$$\m = ((v^m,I)^k \colon f) = (((v^m,I)^k\colon (v_{s-1}v_{s})^{\omega(v_{s-1}v_{s})})\colon g) = ((v^m,I)^{k-1}\colon g).$$
Hence $\m\in \ass((v^m,I)^{k-1})$. This contradicts the minimality of $k$, so  $(v_{s-1}v_{s})^{\omega(v_{s-1}v_{s})}\nmid f$, as claimed.

\medskip

{\it Claim $2$}: $\deg_{v_s}(f) = \omega(v_{s-1}v_{s})-1$ and $\deg_{v_{s-1}}(f)\geqslant \omega(v_{s-1}v_{s})$. 

Note that  $v_sf\in (v^m,I)^k$, we can write $v_{s}f$ as $v_{s}f= hf_1\cdots f_k$, where $h$ is a monomial and $f_1,\ldots,f_k\in \mathcal{G}((v^m,I))$. Since  $f\notin (v^m,I)^k$,    $v_{s}\mid f_j$ for some $j\in [k]$. Therefore,  $f_j=(v_{s-1}v_{s})^{\omega(v_{s-1}v_{s})}$, since  $v_{s}$ is a leaf of $G$.
In particular, $\deg_{v_{s-1}}(f) \geqslant \omega(v_{s-1}v_{s})$ and $\deg_{v_{s}}(f) \geqslant \omega(v_{s-1}v_{s})-1$.  By Claim $1$, $(v_{s-1}v_{s})^{\omega(v_{s-1}v_{s})}\nmid f$, which forces $\deg_{v_{s}}(f) < \omega(v_{s-1}v_{s})$, and thus $\deg_{v_{s}}(f) = \omega(v_{s-1}v_{s})-1$, as claimed.

\medskip
{\it Claim $3$}: If $s(v,G'_\omega)=s(v,G_\omega)$,  where $G'_{\omega}=G_\omega\setminus v_s$, then $k\ge s(v,G_\omega)+1$. 

Let $\m'=(z\mid z\neq v_s)$. For any $z\in \m'$,  $fz\in (v^m,I)^k$ since $\m = ((v^m,I)^k \colon f)$. 
We can write $fz$ as 
\[
fz = \gamma g_1\cdots g_k,
\]
where $\gamma$ is a monomial and $g_1,\ldots,g_k\in \mathcal{G}((v^m,I))$. Since  $z\ne v_{s}$ and by  Claim $2$,  $\deg_{v_{s}}(fz)=\omega(v_{s-1}v_{s})-1$. Therefore,  $g_i\ne (v_{s-1}v_{s})^{\omega(v_{s-1}v_{s})}$ for all $i\in [k]$. In particular, $fz\in (v^m,I')^k$, which implies that  $\m'=((v^m,I')^k\colon f)$. Therefore, $\m'\in \ass((v^m,I')^k)$. Since $|V(G')| = n-1$, by the induction hypothesis, $k\geqslant s(v,G'_{\omega})+1=s(v,G_{\omega})+1$.

\medskip
 We will prove that $k\geqslant s(v,G_{\omega})+1$ by considering the following five cases.
\begin{itemize}
     \item[(i)]  $\omega(v_{s-1}v_{s}) < \omega(v_{s-2}v_{s-1})$;
      \item[(ii)]  $\omega(v_{s-1}v_{s}) = \omega(v_{s-2}v_{s-1})$ and $L_G(v_{s-1})\setminus \{v_s\}\ne \emptyset$;
    \item[(iii)] $\omega(v_{s-1}v_{s})=\omega(v_{s-2}v_{s-1})$,  $L_G(v_{s-1})=\{v_s\}$ and $L_G(v_{s-2})= \emptyset$;
    \item[(iv)] $\omega(v_{s-1}v_{s}) = \omega(v_{s-2}v_{s-1})$,  $L_G(v_{s-1})=\{v_s\}$,  $L_G(v_{s-2})\ne \emptyset$ and $\omega(v_{s-2}u) < \omega(v_{s-2}v_{s-1})$ for some $u\in L_G(v_{s-2})$; 
    \item[(v)] $\omega(v_{s-1}v_{s}) = \omega(v_{s-2}v_{s-1})$,  $L_G(v_{s-1})=\{v_s\}$,  $L_G(v_{s-2})\ne \emptyset$ and $\omega(v_{s-2}z) \ge \omega(v_{s-2}v_{s-1})$ for all $z\in L_G(v_{s-2})$.
\end{itemize}

For the cases (i) and (ii),  we  first prove that  $s(v,G'_\omega)=s(v,G_\omega)$. Therefore, by Claim $3$,  $k\ge s(v,G_\omega)+1$.  

This is trivial if (i)  holds. If (ii)  holds, then there exists  a leaf $r\in L_G(v_{s-1})$ such that $r\ne v_s$. By the conditions (5) and (6), $\omega(v_{s-1}r)=\omega(v_{s-1}v_{s})=\omega(v_{s-2}v_{s-1})$. In particular, $s(v,G'_{\omega})=s(v,G_{\omega})$.

For the case (iv), there exists a leaf $u\in L_G(v_{s-2})$ such that $\omega(v_{s-2}u) < \omega(v_{s-2}v_{s-1})$ and $\omega(v_{s-2}u)\leqslant \omega(v_{s-2}z)$ for all  $z\in L_G(v_{s-2})$. Together with the condition $(3)$, this yields that $\omega(v_{s-2}u)<\omega(v_{s-2}z)$ for all $z\in N_G(v_{s-2})\setminus L_G(v_{s-2})$. Therefore,  $s(v,G_\omega) = s(v, (G\setminus u)_\omega)$ and  $\omega(v_{s-2}u) \leqslant \omega(v_{s-2}z)$  for all  $z\in N_G(v_{s-2})$. 
Using the same arguments  as in   Claims $1$, $2$ and $3$, we can deduce $k\geqslant s(v,G_{\omega})+1$. 

For the cases (iii) and (v), by the condition $(3)$, we have 
\begin{equation}\label{BX1}
	\omega(v_{s-1}v_{s-2}) \leqslant \omega(v_{s-2}z) \text{ for all } z\in N_G(v_{s-2}).
	\tag{$\dag$} 
\end{equation}
Note that $s(v,G'_{\omega})=s(v,G_{\omega})-1$, and  $v_{s-1}$ is a leaf of $G'_{\omega}$  by condition $(4)$. For every $z\ne v_s$, since $\m = ((v^m,I)^k \colon f)$, $zf\in (v^m,I)^k$. Therefore,  we can write $zf$ as 
\begin{equation}\label{fz}
	fz = h g_1'\cdots g_k',
	\tag{$\ddag$} 
\end{equation}
where $h$ is a monomial and $g_1',\ldots,g_k'\in \mathcal{G}((v^m,I))$. Since  $\deg_{v_s}(zf)=\omega(v_{s-1}v_s)-1$, $g_i'\ne (v_{s-1}v_s)^{\omega(v_{s-1}v_s)}$ for all $i\in[k]$. Thus, $zf\in (v^m,I')^k$.  In particular, $\m'=((v^m,I')^k\colon f)$,
where $I'=I(G'_\omega)$. 

Substituting  $z=v_{s-1}$ into the expression  (\ref{fz}), we can obtain  that $v_{s-1}\mid g_j'$ for some $j\in[k]$, since $f\notin (v^m,I)^k$.
Note that $g_i'\ne (v_{s-1}v_s)^{\omega(v_{s-1}v_s)}$ for all $i\in [k]$, thus $g_j'=(v_{s-2}v_{s-1})^{\omega(v_{s-2}v_{s-1})}$. Therefore, $\deg_{v_{s-2}}(f)\geqslant \omega(v_{s-2}v_{s-1})$.
By Claim $2$, $v_{s-2}^{\omega(v_{s-2}v_{s-1})}v_{s-1}^{\omega(v_{s-1}v_s)}|f$. Therefore, $f$ can be written as  $f=f_1f_2$, where $f_1= v_{s-2}^{\omega(v_{s-2}v_{s-1})}\cdot v_{s-1}^{\omega(v_{s-1}v_s)}$.
Note that $v_{s-1}$ is a leaf of $G'_{\omega}$, by Lemma \ref{dec-power} and  the expression (\ref{BX1}), we have $((v^m, I')^k \colon f_1)=(v^m, I')^{k-1}$. Thus
$$
\m'= ((v^m, I')^k \colon f) = (((v^m, I')^k \colon f_1)\colon f_2) = ((v^m, I')^{k-1}\colon f_2).
$$
Therefore, $\m'\in\ass((v^m, I')^{k-1})$. By the induction hypothesis, $k-1\geqslant s(v,G'_{\omega})+1=s(v,G_{\omega})$,  implying  that $k\geqslant s(v,G_{\omega})+1$.
We complete the  proof.
\end{proof}

For a monomial $u$ in $R$, its support is $\supp(u) = \{ x_i\mid x_i \text{ divides } u \}$, i.e., it is the set of all variables appearing in $u$.
For a monomial ideal $I$ with $\mathcal{G}(I)=\{u_1,\ldots,u_m\}$, we set $\supp(I)=\bigcup\limits_{i=1}^{m}\supp(u_i)$.
Before proving the main result, we need the following  two  lemmas.

\begin{lem} \cite[Theorem 4.1]{HQ} \label{associated-primes} Let $I$ and $J$ be monomial ideals such that $\supp(I)\cap \supp(J) =\emptyset$. Then, for every $t\geqslant 1$, we have
$$\ass((I+J)^t) = \bigcup_{i=1}^t \{\p+\q\mid \p \in \ass(I^i) \text{ and } \q \in \ass(J^{t-i+1})\}.$$
\end{lem}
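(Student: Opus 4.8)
The plan is to translate everything into monomial combinatorics. Since $\supp(I)\cap\supp(J)=\emptyset$, I would first discard the variables lying in neither support (they occur in no associated prime of these ideals), reducing to $R=R_1\otimes_K R_2$ with $R_1=K[\supp(I)]$, $R_2=K[\supp(J)]$, and I would assume $I,J\neq 0$; recall also that every prime occurring in any of the sets $\ass(\cdot)$ is generated by a set of variables, so $\p+\q$ with $\p\subseteq R_1$, $\q\subseteq R_2$ such sets is again a prime of $R$. Each monomial of $R$ factors uniquely as $g=ab$ with $a\in R_1$, $b\in R_2$; since $(I+J)^t=\sum_{c+d=t}I^cJ^d$ and each generator of $I^cJ^d$ is an $R_1$-monomial of $I^c$ times an $R_2$-monomial of $J^d$, one gets $g=ab\in(I+J)^t$ iff $a\in I^c$ and $b\in J^d$ for some $c+d=t$. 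Writing $\alpha_L(m)=\max\{e:m\in L^e\}$ for a proper monomial ideal $L$, I would record two elementary facts: $\alpha_L$ is monotone under divisibility, and $\alpha_L(zm)\le\alpha_L(m)+1$ for a variable $z$ (in an expression of $zm$ as an element of $L^{\alpha_L(m)+2}$, cancel one copy of $z$ either from a generator or from the cofactor, landing $m$ in $L^{\alpha_L(m)+1}$). Combining these yields the key formula: for monomials $u\in R_1$, $v\in R_2$, the colon ideal $\bigl((I+J)^t:uv\bigr)$ is the monomial ideal whose monomials are precisely the $ab$ with $\alpha_I(au)+\alpha_J(bv)\ge t$. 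Consequently $\bigl((I+J)^t:uv\bigr)=\p+\q$ exactly when, for all $a\in R_1$, $b\in R_2$, one has $\alpha_I(au)+\alpha_J(bv)\ge t$ if and only if $a\in\p$ or $b\in\q$.

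For the inclusion $\supseteq$: given $\p\in\ass(R_1/I^i)$ and $\q\in\ass(R_2/J^j)$ with $i+j=t+1$, choose monomials $u,v$ with $(I^i:u)=\p$ in $R_1$ and $(J^j:v)=\q$ in $R_2$. Since $I\neq 0$ we have $\p\neq 0$; picking a variable $x\in\p$, from $xu\in I^i$ together with $\alpha_I(xu)\le\alpha_I(u)+1$ and $u\notin I^i$ we are forced to $\alpha_I(u)=i-1$, and symmetrically $\alpha_J(v)=j-1$. Feeding the resulting bounds $\alpha_I(au)\ge i-1$ and $\alpha_J(bv)\ge j-1$ into the criterion above, the equivalence ``$\alpha_I(au)+\alpha_J(bv)\ge i+j-1$ iff $\alpha_I(au)\ge i$ or $\alpha_J(bv)\ge j$'' holds (the forward implication uses the two lower bounds, the converse is immediate), so $\bigl((I+J)^t:uv\bigr)=\p+\q$ and hence $\p+\q\in\ass(R/(I+J)^t)$.

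For $\subseteq$: let $\p+\q\in\ass(R/(I+J)^t)$, with monomial witness $w$, factored as $w=uv$. Then $w\notin(I+J)^t$, so $\alpha_I(u)+\alpha_J(v)\le t-1$. First suppose $\p\neq 0$, and pick a variable $x\in\p$; from $x\in\bigl((I+J)^t:uv\bigr)$ the colon formula gives $\alpha_I(xu)+\alpha_J(v)\ge t$, which with $\alpha_I(xu)\le\alpha_I(u)+1$ forces both $\alpha_I(u)+\alpha_J(v)=t-1$ and $\alpha_I(xu)=\alpha_I(u)+1$. Set $i=\alpha_I(u)+1$ and $j=\alpha_J(v)+1$, so $i+j=t+1$. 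Then $(I^i:u)=\p$ in $R_1$: ``$\supseteq$'' because $xu\in I^i$ for each variable $x\in\p$, and ``$\subseteq$'' because an $R_1$-monomial $m$ with $mu\in I^i$ satisfies $muv\in I^iJ^{\,j-1}\subseteq(I+J)^t$, whence $m\in\bigl((I+J)^t:uv\bigr)=\p+\q$, and $m\in\p$ as $m$ involves no $R_2$-variable; thus $\p\in\ass(R_1/I^i)$, and symmetrically $\q\in\ass(R_2/J^j)$, giving $\p+\q$ in the right-hand side. The remaining possibility $\p=0$ (hence $\q\neq 0$, so again $\alpha_I(u)+\alpha_J(v)=t-1$ by the mirror argument) cannot occur: since $I\neq 0$ there is a positive-degree $R_1$-monomial $m\in I\subseteq\bigl(I^{\alpha_I(u)+1}:u\bigr)$, so $muv\in(I+J)^t$ and $m\in\bigl((I+J)^t:uv\bigr)=\q$, which is absurd because $\q$ is generated by $R_2$-variables.

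I expect the two load-bearing points to be the colon-ideal formula and, inside the $\supseteq$ direction, the observation that any witness $u$ for $\p\in\ass(R_1/I^i)$ automatically has $\alpha_I(u)=i-1$: without this exact equality the naive product witness $uv$ need not realize $\p+\q$, and the equality is forced precisely by the ``$\alpha_L$ drops by at most $1$ under multiplication by a variable'' lemma, which is the only genuinely non-formal ingredient. The degenerate case $I=0$ (or $J=0$) actually fails, so the hypothesis $I,J\neq 0$ should be stated explicitly; $I=R$ or $J=R$ are trivial since both sides are empty.
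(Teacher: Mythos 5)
The paper offers no proof of this lemma: it is imported verbatim from \cite[Theorem 4.1]{HQ}, so there is no in-paper argument to compare against. Judged on its own, your argument is correct. The reduction to $R=R_1\otimes_K R_2$ with $R_1=K[\supp(I)]$, $R_2=K[\supp(J)]$ is legitimate because associated primes of a monomial ideal are supported on its variables and are preserved under adjoining extraneous indeterminates; the description of $\bigl((I+J)^t\colon uv\bigr)$ via the condition $\alpha_I(au)+\alpha_J(bv)\geqslant t$ follows correctly from the bigraded decomposition $(I+J)^t=\sum_{c+d=t}I^cJ^d$; and the two elementary facts about $\alpha_L$ (monotonicity, and $\alpha_L(zm)\leqslant\alpha_L(m)+1$ for a variable $z$) both hold by the cancellation argument you sketch. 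The load-bearing observation that any monomial witness $u$ for $\p\in\ass(R_1/I^i)$ automatically satisfies $\alpha_I(u)=i-1$ --- forced by the variable-drop inequality once one knows $\p\neq 0$ --- is exactly what makes both inclusions close, and you rightly single it out; your treatment of the case $\p=0$ in the $\subseteq$ direction (ruling it out via a positive-degree generator of $I$) is also sound. Your final caveat is correct: if, say, $I=0$, the right-hand side degenerates to $\bigcup_{j=1}^{t}\ass(J^j)$, which need not equal $\ass(J^t)$ absent persistence of associated primes, so the hypothesis should implicitly carry $I,J$ nonzero and proper, as it does in \cite{HQ} where $I$ and $J$ are ideals of two separate polynomial rings. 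In the application inside the proof of Theorem \ref{main} the ideals fed into this lemma are always of the form $(r_i^{\nu_S(r_i)},I(T^i_\omega))$ and hence nonzero and proper, so the usage in the paper is unaffected.
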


For a monomial ideal $I$ of $R$ and $j\in [n]$, define $I[x_j] = I R[x_j^{-1}] \cap R$ as the localization of $I$ with respect to the variable $x_j$. Note that $I[x_j] = (I : x_j^{\infty})$. More generally, for a subset $W\subseteq \{x_1,\ldots,x_n\}$, define $I[W] = IR[x^{-1}\mid x\in W] \cap R$.

\begin{lem}\label{local} If $I$ is a monomial ideal, then for all $t\geqslant 1$, we have
$$\ass(I^t) \setminus \{\m\} = \bigcup_{j=1}^n \ass(I[x_j]^t).$$
\end{lem}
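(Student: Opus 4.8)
The plan is to reduce the statement to the textbook interaction between associated primes and localization at a single variable. Throughout, for a monomial ideal $J$ of $R$ I write $\ass(J)$ for $\ass_R(R/J)$, and for $j\in[n]$ recall that $I[x_j]=IR[x_j^{-1}]\cap R=(I:x_j^\infty)$. The argument has two preliminary steps and then an assembly.

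First, I would check that localization at $x_j$ commutes with taking powers for monomial ideals, i.e. $I[x_j]^t=(I^t)[x_j]$. Write each monomial generator $u$ of $I$ as $u=x_j^{a(u)}v(u)$ with $x_j\nmid v(u)$. A monomial $\mu\in R$ lies in $IR[x_j^{-1}]\cap R$ iff $u\mid x_j^N\mu$ for some $u\in\mathcal{G}(I)$ and some $N\geqslant 0$, iff $v(u)\mid\mu$ for some $u$ (since $x_j\nmid v(u)$); hence $I[x_j]$ is the monomial ideal generated by $\{v(u)\mid u\in\mathcal{G}(I)\}$. Applying the same description to $I^t$, whose monomials are generated by the products $u_1\cdots u_t$ with $u_\ell\in\mathcal{G}(I)$ and $v(u_1\cdots u_t)=v(u_1)\cdots v(u_t)$, yields $(I^t)[x_j]=(I[x_j])^t=I[x_j]^t$.

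Second, I would prove $\ass(I[x_j]^t)=\{\p\in\ass(I^t)\mid x_j\notin\p\}$. Since $I[x_j]^t=(I^t)[x_j]=(I^t:x_j^\infty)$ and $((I^t:x_j^\infty):x_j)=(I^t:x_j^\infty)$, the variable $x_j$ is a nonzerodivisor on $R/I[x_j]^t$; in particular no associated prime of $R/I[x_j]^t$ contains $x_j$, so $\ass_R(R/I[x_j]^t)=\ass_R\bigl((R/I[x_j]^t)[x_j^{-1}]\bigr)$ by the standard behaviour of $\ass$ under localization. Now $(R/I[x_j]^t)[x_j^{-1}]=R[x_j^{-1}]/I^tR[x_j^{-1}]$, because $(I^t)[x_j]R[x_j^{-1}]=\bigl(I^tR[x_j^{-1}]\cap R\bigr)R[x_j^{-1}]=I^tR[x_j^{-1}]$, and again by the standard description $\ass_R\bigl(R[x_j^{-1}]/I^tR[x_j^{-1}]\bigr)=\{\p\in\ass_R(R/I^t)\mid x_j\notin\p\}$.

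Finally I would assemble the two steps: $\bigcup_{j=1}^n\ass(I[x_j]^t)=\bigcup_{j=1}^n\{\p\in\ass(I^t)\mid x_j\notin\p\}=\{\p\in\ass(I^t)\mid\p\ne\m\}=\ass(I^t)\setminus\{\m\}$, where the middle equality holds because a prime $\p\subseteq\m$ equals $\m$ precisely when it contains every $x_j$. I do not expect a genuine obstacle: the only point needing a little care is the first preliminary step, namely that contracting the extension back to $R$ still commutes with powers, which is handled by the explicit generator computation above; everything else is the standard localization behaviour of associated primes.
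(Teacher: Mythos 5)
The paper does not spell out a proof here; it only points to \cite[Lemma 11]{T} and says the argument is similar. Your write-up is exactly the argument that reference would give, made explicit and self-contained: you verify that for monomial ideals localizing at a variable commutes with taking powers, $(I^t)[x_j]=I[x_j]^t$, then observe that $x_j$ is a nonzerodivisor on $R/I[x_j]^t$ so $\ass(I[x_j]^t)=\{\p\in\ass(I^t)\mid x_j\notin\p\}$ via the standard behaviour of $\ass$ under localization, and finally take the union over $j$, using that the associated primes of a monomial ideal are monomial primes contained in $\m$ (so missing at least one variable exactly when they are not $\m$). All of these steps are correct; the only point where a reader might want a word more is the appeal to $\ass_R(M)=\ass_R(M[x_j^{-1}])$ when $x_j$ is a nonzerodivisor on $M$, but that is indeed standard for Noetherian rings, and you have stated it accurately. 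In short, your proof is correct and is essentially the same localization argument the paper delegates to its reference, with the added benefit of making the monomial-ideal computation $(I^t)[x_j]=I[x_j]^t$ explicit rather than implicit.
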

\begin{proof} The proof is  similar to that of \cite[Lemma 11]{T}.
\end{proof}

We say  that  $G_\omega$ is a  trivial  tree if $|V(G_\omega)|=1$. Next, we will prove the major result of this paper.

\begin{thm}\label{main} Let $t$ be a positive integer, and let $G_\omega$ be an increasing weighted tree. If  $C$ is  a vertex cover of $G$, then  $C\in \ass (I(G_\omega)^t)$ if and only if $C$ is a strong vertex cover of $G_\omega$ and  $s(C)+1\leqslant t$.
\end{thm}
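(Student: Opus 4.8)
The plan is to prove the theorem by induction on $n = |V(G)|$, reducing to the local statements already established in Lemmas \ref{sG1}, \ref{sG2} together with the decomposition lemmas \ref{associated-primes} and \ref{local}. The base case $n=1$ is trivial (or $n=2$, where $I(G_\omega)$ is a principal ideal). For the inductive step, I would split the argument according to whether $C = V(G)$ or $C \subsetneq V(G)$.

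If $C = V(G)$, then $(C) = \m$, and by Lemma \ref{maxIdeal} we have $\m \notin \ass(I(G_\omega)^t)$ for all $t$; on the other hand $V(G)$ is not a strong vertex cover (by definition a strong vertex cover satisfies $C \ne V$), so both sides of the equivalence are false, and there is nothing to prove. So assume $C \subsetneq V(G)$, and set $S = V(G) \setminus C \ne \emptyset$, which is an independent set of $G$. The key observation is that $(C) \ne \m$, so by Lemma \ref{local}, $(C) \in \ass(I(G_\omega)^t)$ if and only if $(C) \in \ass(I(G_\omega)[x_j]^t)$ for some $j$ — but more usefully, choosing $x_j \in S$ and localizing repeatedly at all variables of $S$, one sees $(C) \in \ass(I(G_\omega)^t)$ iff $(C) \in \ass(I(G_\omega)[S]^t)$. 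Now I claim $I(G_\omega)[S]$ has a clean structure: localizing at a vertex $z \in S$ kills every generator $(x_i x_j)^{\omega}$ with $z \in \{x_i,x_j\}$, i.e. every edge incident to $z$, replacing it by $x_i^{\omega(x_iz)}$ (a pure power of the neighbor). Carrying this out for all of $S$ yields, for each $u \in N_G(S)$, the monomial $u^{\nu_S(u)}$ as a generator, while the edges not meeting $S$ survive untouched; the edges $uz'$ with $u \in N_G(S)$, $z' \notin S$, $\omega(uz') \ge \nu_S(u)$ become redundant (divisible by $u^{\nu_S(u)}$). Thus $I(G_\omega)[S] = \bigl(\sum_{u \in N_G(S)} (u^{\nu_S(u)})\bigr) + I((G\setminus S)'_\omega)$, where the surviving edges are precisely the edges of $G_S$; and this ideal splits as a "sum over the connected components of $G_S$" because distinct components share no vertices (Lemma \ref{components}).

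This is where Lemma \ref{strong-vc} enters: $(C) \in \ass(I(G_\omega)[S]^t)$ reduces, via Lemma \ref{associated-primes} applied iteratively across the connected components of $G_S$ (the supports are disjoint), to a statement about each component. On a component $T^i$ containing the unique $r_i \in N_G(S)$, the relevant ideal is $(r_i^{\nu_S(r_i)}, I(T^i_\omega))$ with exponent $\nu_S(r_i) > \mu_{T^i}(r_i)$ — this last inequality holds because every surviving edge at $r_i$ in $G_S$ has weight $< \nu_S(r_i)$ by the construction of $G_S$ — so Lemma \ref{sG2} applies: $\m_{T^i} \in \ass((r_i^{\nu_S(r_i)}, I(T^i_\omega))^{t_i})$ iff $t_i \ge s(r_i, T^i_\omega) + 1$. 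On a component $T^i$ with $V(T^i) \cap N_G(S) = \emptyset$ the ideal is just $I(T^i_\omega)$, and the only vertex cover of $T^i$ contained in $C$ that forces all of $V(T^i)$ is $V(T^i)$ itself, which by Lemma \ref{maxIdeal} never occurs as an associated prime — so for $(C)$ to be associated, no such "cover-only" component can exist, which is exactly the condition in Lemma \ref{strong-vc} that $G_S$ have exactly $k = |N_G(S)|$ components; equivalently, $C$ must be strong. Combining: $(C) \in \ass(I(G_\omega)^t)$ iff $C$ is strong and there is a decomposition $t = \sum (t_i - 1) + 1$ with each $t_i \ge s(r_i, T^i_\omega)+1$, i.e. iff $C$ is strong and $t \ge 1 + \sum_{i=1}^k s(r_i, T^i_\omega) = s(C) + 1$, using the formula in Lemma \ref{strong-vc}. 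Care is needed with the bookkeeping of the index shift $t \mapsto t - i + 1$ in Lemma \ref{associated-primes} when iterating over $k$ components: the net effect is that the "budget" $t$ is distributed as $\sum_i a_i$ with $a_i \ge 1$, each component $i$ receiving exponent $a_i$ and contributing when $a_i \ge s(r_i,T^i_\omega)+1$, feasible exactly when $t \ge \sum_i (s(r_i,T^i_\omega)+1) - (k-1) = s(C)+1$.

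\textbf{Main obstacle.} The delicate point is the precise identification of $I(G_\omega)[S]$ — in particular verifying that after localizing at all of $S$ the minimal generators are exactly the $u^{\nu_S(u)}$ for $u \in N_G(S)$ together with the $(x_ix_j)^{\omega(x_ix_j)}$ for edges $x_ix_j \in E(G_S)$, with everything else redundant — and then justifying that the resulting ideal genuinely decomposes as a sum of ideals on the connected components of $G_S$ with pairwise disjoint supports, so that Lemma \ref{associated-primes} applies. One must also check the hypothesis $\nu_S(r_i) > \mu_{T^i}(r_i)$ needed to invoke Lemma \ref{sG2} on each component, and handle the degenerate cases where a component $T^i$ is trivial (a single vertex $r_i$), where $(r_i^{\nu_S(r_i)})$ is already a prime power and contributes trivially with $s(r_i, T^i_\omega) = 0$. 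Once these structural facts are in place, the combinatorial bookkeeping with Lemma \ref{associated-primes} is routine.
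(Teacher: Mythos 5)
Your proposal follows essentially the same route as the paper's proof: localize at $S = V(G)\setminus C$ via Lemma~\ref{local}, identify $I(G_\omega)[S]$ as a sum over the connected components of $G_S$ (with disjoint supports), apply Lemma~\ref{associated-primes} to split associated primes componentwise, rule out components disjoint from $N_G(S)$ by Lemma~\ref{maxIdeal} (which forces $C$ to be strong via Lemma~\ref{strong-vc}), and invoke Lemma~\ref{sG2} on each component $(r_i^{\nu_S(r_i)}, I(T^i_\omega))$ to derive the budget condition $t \geqslant s(C)+1$. One small remark: the ``induction on $n$'' you announce at the outset is not actually needed for the theorem itself --- your argument (and the paper's) is a direct reduction to the lemmas, the inductions having already been absorbed into the proofs of Lemmas~\ref{maxIdeal} and~\ref{sG2}.
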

\begin{proof} Let $I=I(G_\omega)$.  According to  Lemma \ref{maxIdeal},  $\m\notin \ass(I^t)$ for all $t\geqslant 1$. Therefore,  we can assume that $C\ne V(G)$. Let $S =V(G)\setminus C$, then $S\ne \emptyset$  and $S$ is an independent set of $G$. By Lemma \ref{local}, we can deduce that $(C)\in \ass(I(G_\omega)^t)$ if and only if $(C)\in \ass(I[S]^t)$.
	
	Let $N_G(S)= \{r_1,\ldots,r_k\}$. By Lemmas  \ref{sub-tree} and   \ref{components}, we  can  assume that the connected components of $G_S$ are $T^1,\ldots, T^k, T^{k+1},\ldots, T^{\ell}$, where $r_i\in V(T^i)$ for all $i\in[k]$, and  $V(T^j)\cap N_G(S) = \emptyset$ for all $k+1\le j\le \ell$. Moreover,   $(T^i_\omega,r_i)$ and   $T^j$ are  either  trivial trees or  increasing weighted trees for all $i\in[k]$ and    $k+1\le j\le \ell$.
	
	First, we prove that 
	\begin{equation}\label{LocalS}
		I[S] = \sum_{i=1}^k (r_i^{\nu_S(r_i)},I(T^i_\omega))+\sum_{i=k+1}^{\ell} I(T^i_\omega),
		\tag{$\S$}
	\end{equation}
	where we use a convention that $I(T^i_{\omega}) = (0)$ if $T^i$ is a trivial tree.  

Indeed, $$I[S] = (x^{\nu_S(x)}\mid x\in N_G(S)) + I((G\setminus S)_\omega).$$
 For  any  $uv\in E((G\setminus S)_\omega)$, if  $u,v\in N_G(S)$, then  by Lemma \ref{components}(1),   $(uv)^{\omega(uv)} \in (x^{\nu_S(x)}\mid x\in N_G(S))$; if $u\in N_G(S)$,  $v\in C\setminus N_G(S)$ and  $\nu_S(u) \leqslant \omega(uv)$, then 
   $(uv)^{\omega(uv)} \in (x^{\nu_S(x)}\mid x\in N_G(S))$. These two facts imply that
	$$I[S] = (x^{\nu_S(x)}\mid x\in N_G(S)) + I((G_S)_\omega).$$
	Thus,
	$$I[S] = (x^{\nu_S(x)}\mid x\in N_G(S)) + \sum_{i=1}^{\ell} I(T^i_\omega)
	=\sum_{i=1}^k (r_i^{\nu_S(r_i)},I(T^i_\omega))+\sum_{i=k+1}^{\ell} I(T^i_\omega),$$
	as claimed.
	
	By Lemma \ref{associated-primes}, we can deduce that  $(C)\in \ass(I[S]^t)$ if and only if  
\[
(C)=(C_1)+\cdots +(C_\ell),
\]
where  $C_i =C\cap V(T^i)$ for all $i\in [\ell]$ such that $(C_i)\in \ass((r_i^{\nu_S(r_i)},I(T^i_\omega))^{t_i})$ for all $i\in [k]$ and
$(C_j)\in \ass(I(T^j_\omega)^{t_j})$ for all $k+1\leqslant j\leqslant \ell$. Furthermore,  $t=\sum\limits_{i=1}^{\ell} (t_i-1) + 1$ and each  $t_i\geqslant 1$.

Now, we will prove the assertion of this theorem.
	
If $(C)\in \ass(I[S]^t)$,  then, from the above description, we can see that the ideal $(C)$ can be written as an expression $(C)=(C_1)+\cdots +(C_\ell)$, where each $(C_i)$ satisfies the conditions in the above paragraph.
Note that, for  all $k+1\le j\le \ell$ and  $t_j\geqslant 1$, by Lemma \ref{maxIdeal},  $(C_j)\notin \ass(I(T^j_\omega)^{t_j})$. Therefore,  $\ell=k$.  By Lemma \ref{strong-vc},
$C$ is a strong vertex cover  of $G_\omega$. According to Lemma \ref{sG2}, we know that  for each $i\in[k]$,  $(C_i)\in \ass((r_i^{\nu_S(r_i)},I(T^i_\omega))^{t_i})$ if and only if $t_i-1\geqslant s(r_i,T^i_\omega)$. Therefore, 
\[
t=\sum_{i=1}^{k} (t_i-1) + 1 \geqslant \sum_{i=1}^{k} s(r_i,T^i_\omega)+1 = s(C)+1,
\]
where the last   equality holds by Lemma 2.9.

Conversely, if $C$ is a strong vertex cover of $G_\omega$ and $t \geqslant s(C)+1$, then,   by  Lemma \ref{strong-vc}, $s(C)=\sum\limits_{i=1}^{k} s(r_i,T^i_\omega)$.
Choose $t_i=s(r_i,T^i_\omega)+1$ for all $i\in[k-1]$ and $t_k=t-\sum\limits_{i=1}^{k-1}s(r_i,T^i_\omega)$. Then, $t_k\geqslant s(r_k,T^k_\omega)+1$ and $t=\sum\limits_{i=1}^{k}(t_i-1)+1$. By the choice of each $t_i$,
$(C_i)\in \ass((r_i^{\nu_S(r_i)}, I(T^i_\omega))^{t_i})$ by Lemma \ref{sG2}. Therefore, $(C)\in \ass(I[S]^t)$ and the proof is complete.
\end{proof}

From  the above theorem, we can derive the following two formulas.

\begin{cor} \label{Infinity} If $G_\omega$ is an increasing weighted tree, then
$$\ass^{\infty}(I(G_\omega)) = \{(C)\mid C \text{ is a strong vertex cover of $G_\omega$}\}.$$
\end{cor}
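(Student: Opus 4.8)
The plan is to deduce Corollary~\ref{Infinity} directly from Theorem~\ref{main} by analyzing what happens for $t \gg 0$. Recall that $\ass^\infty(I(G_\omega))$ is by definition the stable set $\ass(I(G_\omega)^t)$ for all $t \geqslant \astab(I(G_\omega))$, which exists by Brodmann's theorem. So I need to show two inclusions, or rather a single set equality that holds for all sufficiently large $t$.

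First I would observe that every strong vertex cover $C$ of $G_\omega$ has a well-defined finite number $s(C)$ of special vertices, so the quantity $N := \max\{s(C)+1 \mid C \text{ is a strong vertex cover of } G_\omega\}$ is a finite positive integer (the set of vertex covers of the finite graph $G$ is finite, hence so is the set of strong vertex covers). Now fix any $t \geqslant N$. By Theorem~\ref{main}, a vertex cover $C$ of $G$ satisfies $(C) \in \ass(I(G_\omega)^t)$ if and only if $C$ is a strong vertex cover of $G_\omega$ and $s(C)+1 \leqslant t$. But since $t \geqslant N \geqslant s(C)+1$ for every strong vertex cover $C$, the second condition is automatic. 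Hence for every $t \geqslant N$ we have
$$\ass(I(G_\omega)^t) = \{(C) \mid C \text{ is a strong vertex cover of } G_\omega\},$$
where I also use Lemma~\ref{maxIdeal} to exclude $\m$ (so that every associated prime is indeed of the form $(C)$ for a vertex cover $C$, as recalled in the introduction). Since the right-hand side is independent of $t$, this simultaneously shows that $\astab(I(G_\omega)) \leqslant N$ and that the stable value $\ass^\infty(I(G_\omega))$ equals this set, proving the corollary.

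I do not expect any real obstacle here: the corollary is a formal consequence of Theorem~\ref{main} together with the finiteness of the set of strong vertex covers and Lemma~\ref{maxIdeal}. The only point requiring a word of care is making sure the set $\{(C) \mid C \text{ strong}\}$ is actually attained as $\ass(I(G_\omega)^t)$ for some $t$ and remains constant thereafter, rather than merely being an upper bound for the limit; but this is exactly what the displayed equality above gives, since it is valid for all $t \geqslant N$ at once. For completeness I would also remark that this argument yields the stronger statement $\astab(I(G_\omega)) = \max\{s(C)+1 \mid C \text{ is a strong vertex cover of } G_\omega\}$ announced in the introduction: the inequality $\leqslant$ is what we just proved, and for $\geqslant$ one notes that if $t < s(C_0)+1$ for a strong vertex cover $C_0$ achieving the maximum, then $(C_0) \notin \ass(I(G_\omega)^t)$ by Theorem~\ref{main} while $(C_0) \in \ass^\infty(I(G_\omega))$, so $\ass(I(G_\omega)^t) \neq \ass^\infty(I(G_\omega))$ and hence $t < \astab(I(G_\omega))$.
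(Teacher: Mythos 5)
Your argument is correct and is precisely the routine deduction the paper has in mind; the authors state Corollary~\ref{Infinity} without proof as an immediate consequence of Theorem~\ref{main}, and your filled-in argument (finiteness of the set of strong vertex covers makes $N := \max\{s(C)+1\}$ finite, so for $t\geqslant N$ the condition $s(C)+1\leqslant t$ is automatic, whence $\ass(I(G_\omega)^t)$ is the constant set $\{(C)\mid C\text{ strong}\}$) is exactly that deduction. The appeal to Lemma~\ref{maxIdeal} is harmless but slightly redundant, since Theorem~\ref{main} already excludes $\mathfrak m$ (as $V(G)$ is never a strong vertex cover by definition), and the general fact that every associated prime of a power of $I(G_\omega)$ is generated by a vertex cover, recalled in the introduction, guarantees Theorem~\ref{main} exhausts all associated primes.
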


\begin{cor}\label{main-corr} If $G_\omega$ is an increasing weighted tree, then
$$\astab(I(G_\omega)) = \max\{s(C)+1\mid C \text{ is a strong vertex cover of } G_\omega\}.$$
\end{cor}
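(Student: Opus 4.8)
The plan is to reduce everything to a careful analysis of the localization $I[S]$ at the independent set $S = V(G)\setminus C$, exactly as the proof of Theorem \ref{main} does. First I would invoke Lemma \ref{maxIdeal} to discard the maximal ideal, so that every associated prime of $I(G_\omega)^t$ has the form $(C)$ with $C\ne V(G)$; then Lemma \ref{local} says $(C)\in\ass(I^t)$ iff $(C)\in\ass(I[S]^t)$. The decomposition ($\S$) writes $I[S]$ as a sum of ideals $(r_i^{\nu_S(r_i)}, I(T^i_\omega))$ on disjoint variable sets (plus pieces $I(T^j_\omega)$ for components avoiding $N_G(S)$), so Lemma \ref{associated-primes} splits the membership $(C)\in\ass(I[S]^t)$ into a choice of $t_i\geqslant 1$ with $\sum(t_i-1)+1 = t$ and componentwise membership conditions. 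The whole point of Theorem \ref{main} is that this is possible exactly when $C$ is a strong vertex cover and $t\geqslant s(C)+1$.

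Second, I would extract the sharp threshold. By Lemma \ref{sG2}, for a strong vertex cover $C$ the smallest $t$ admitting a valid choice of the $t_i$ is obtained by taking $t_i = s(r_i, T^i_\omega)+1$ for every $i$, giving $t = \sum_{i=1}^k s(r_i,T^i_\omega) + 1 = s(C)+1$, where the last equality is Lemma \ref{strong-vc}. Conversely, no smaller $t$ works, again by the lower-bound half of Lemma \ref{sG2}. Hence $(C)\in\ass(I(G_\omega)^t)$ for some $t$ iff $(C)\in\ass(I(G_\omega)^t)$ for all $t\geqslant s(C)+1$, and the least such $t$ is $s(C)+1$. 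In particular $\astab(I(G_\omega))$ is finite and equals the supremum, over all strong vertex covers $C$ of $G_\omega$, of the indices of stability $s(C)+1$ of the individual primes.

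Third, I would assemble the formula. For $t\geqslant \max\{s(C)+1\mid C\text{ strong vertex cover}\}$, Theorem \ref{main} gives $\ass(I(G_\omega)^t) = \{(C)\mid C\text{ strong},\ s(C)+1\leqslant t\} = \{(C)\mid C\text{ strong}\} = \ass^\infty(I(G_\omega))$ (this is Corollary \ref{Infinity}), so $\ass$ is constant from that point on. On the other hand, if $t_0 < \max\{s(C)+1\}$, pick a strong vertex cover $C_0$ attaining the maximum; then $(C_0)\notin\ass(I(G_\omega)^{t_0})$ while $(C_0)\in\ass(I(G_\omega)^{t_0+1})$ once $t_0+1\geqslant s(C_0)+1$, so $\ass$ is not yet stable at $t_0$. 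Therefore $\astab(I(G_\omega)) = \max\{s(C)+1\mid C\text{ is a strong vertex cover of }G_\omega\}$, as claimed. The finiteness of the maximum is automatic since $G_\omega$ has only finitely many vertex covers. The one point that needs a small amount of care — and is the only real obstacle — is ensuring that in the "not yet stable" direction the chosen $C_0$ genuinely becomes an associated prime at the very next step, i.e. that $s(C_0)+1 \leqslant t_0+1$; but this is immediate from the choice of $C_0$ as a maximizer, so no heavy lifting is required and the corollary follows directly from Theorem \ref{main}.
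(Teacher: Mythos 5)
Your approach is the right one: the corollary is a direct consequence of Theorem~\ref{main}, and indeed the paper states it without a separate proof. The first two paragraphs of your proposal, however, merely recapitulate the internals of Theorem~\ref{main}'s proof (localization at $S$, the decomposition~(\ref{LocalS}), Lemmas~\ref{associated-primes}, \ref{sG2}, \ref{strong-vc}); none of that needs to be reopened, since the statement you are trying to prove only uses the finished theorem as a black box.

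There is one small slip in the third paragraph that is worth correcting. Writing $M := \max\{s(C)+1 \mid C \text{ strong vertex cover}\}$, you claim that for every $t_0 < M$, picking a maximizer $C_0$ gives $(C_0)\in\ass(I(G_\omega)^{t_0+1})$, and you assert that the needed inequality $s(C_0)+1\leqslant t_0+1$ ``is immediate from the choice of $C_0$ as a maximizer.'' That is not true in general: the maximizer gives $s(C_0)+1 = M$, so $s(C_0)+1\leqslant t_0+1$ holds if and only if $t_0 \geqslant M-1$. For $t_0 < M-1$ the ideal $(C_0)$ is absent from both $\ass(I(G_\omega)^{t_0})$ and $\ass(I(G_\omega)^{t_0+1})$, so the ``becomes associated at the very next step'' claim fails. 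The fix is that you do not need it step by step: having shown that $\ass(I(G_\omega)^t)$ is constant for $t\geqslant M$, it suffices to observe that $\ass(I(G_\omega)^{M-1}) \neq \ass(I(G_\omega)^{M})$, which holds because Theorem~\ref{main} puts $(C_0)$ in the latter but not the former. If some $t_0\leqslant M-1$ were a stabilization index, then $\ass(I(G_\omega)^{M-1})$ would equal $\ass(I(G_\omega)^{M})$, a contradiction; hence $\astab(I(G_\omega)) = M$. (When $M=1$ the lower bound is trivial.)
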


\begin{exm} Let  $G_\omega$ be a weighted path with $n\geqslant 4$  vertices and  define the weight function as follows:
$$\omega(x_ix_{i+1})=1 \text{ for any } i\in [n-2], \text{ and } \omega(x_{n-1}x_n) =2.$$
Then, $\astab(I(G_\omega)) = n-2$.
\end{exm}
\begin{proof} We can verify that the vertex cover $C = \{x_1,\ldots,x_{n-1}\}$ of $G_\omega$ is a strong vertex cover. Let $S =V(G)\setminus C$. Then 
$S = \{x_n\}$,  $G_S$ has only one connected component $T$, which is the path $x_{n-1}\to x_{n-2}\to \cdots\to x_{1}$, where $(T_\omega,x_{n-1})$ is an increasing weighted tree and $s(x_{n-1},T_\omega)=n-3$. According to  Theorem \ref{main},  $(x_1,\ldots,x_{n-1})\in \ass(I(G_\omega)^t)$ if and only if $t\geqslant n-2$.

Conversely, it is easy to show that $s(C') \leqslant n-3$ for any strong vertex cover $C'$ of $G_\omega$. According to Corollary \ref{main-corr}, $\astab(I(G_\omega)) = n-2$.
\end{proof}

\medskip
\hspace{-6mm} {\bf Data availability statement}

The data used to support the findings of this study are included within the article.

\medskip
\hspace{-6mm} {\bf Conflict of interest}

The authors declare that they have no competing interests.

\medskip
\hspace{-6mm} {\bf Acknowledgement}

 \vspace{3mm}
\hspace{-6mm}  
The third author is  supported by the Natural Science Foundation of Jiangsu Province (No. BK20221353) and the National Natural Science Foundation of China (12471246). The second author is partially supported by Vietnam National Foundation for Science and Technology Development (Grant \#101.04-2024.07). The main part of this
work was done during the second author's visit to Soochow University in Suzhou, China.  He would like to express his gratitude to Soochow University for its warm hospitality.

\end{document}